\theoremstyle{plain} 
\newtheorem{theorem}{Theorem}[section]
\newtheorem{lemma}{Lemma}[section]
\newtheorem{corollary}{Corollary}[section]
\theoremstyle{definition}
\newtheorem{remark}{Remark}[section]
\theoremstyle{definition}
\newtheorem{example}{Example}[section]
\numberwithin{equation}{section}
\newcommand{\gep}{\varepsilon}
\newcommand{\gth}{\vartheta}
\newcommand{\gl}{\lambda}
\newcommand{\gL}{\Lambda}
\newcommand{\gf}{\varphi}
\newcommand{\R}{\mathbb {R}}
\newcommand{\be}{\begin{equation}}
\newcommand{\ee}{\end{equation}}
\begin{document} 

\title[A priori estimates for the $p$-Laplacian]
{A  priori estimates for some elliptic equations involving the  $p$-Laplacian}
\author{Lucio Damascelli, Rosa Pardo}
\address[L. Damascelli]{Dipartimento di Matematica,	Universit\`a di Roma Tor Vergata, 00133--Roma, Italy}
\email{damascel@mat.uniroma2.it}
\address[R. Pardo]{Departamento de Matem\'atica Aplicada,  Universidad Complutense de Madrid
	28040--Madrid, Spain}
\email{rpardo@ucm.es}
\date{}
\thanks{Rosa Pardo was partially supported by Grant MTM2016-75465, MINECO, Spain and Grupo de Investi\'on CADEDIF 920894, UCM.}
\subjclass [2010] {35B45,35J92, 35B09, 35B33, 35J62}
\keywords{A priori estimates, quasilinear elliptic equations with $p$-Laplacian, critical Sobolev esponent, moving planes method,  Pohozaev identity, Picone identity, positive solutions.  
}
\begin{abstract}
We consider the Dirichlet problem for positive solutions of the equation
$-\Delta_p (u) = f(u)$ in a convex, bounded, smooth domain $\Omega  \subset\R^N$, with $f$ locally Lipschitz continuous. \par
We provide sufficient conditions guarantying $L^{\infty} $ a priori bounds for positive solutions of some elliptic equations involving the $p$-Laplacian and extend the class of known nonlinearities for which the solutions  are $L^{\infty} $ a priori  bounded. As a consequence we prove the existence of positive solutions in convex bounded domains.
\end{abstract}
 
\maketitle

\section{Introduction and statement of the results.}

Let  $\Omega  $ be a smooth bounded domain in $\R^N$, $N \geq 2$, which is strictly convex. 
We are interested in proving $L^{\infty}(\Omega)$ a priori bounds for 
$  C^1 (\overline {\Omega })$  weak solutions of the problem

\begin{equation}\label{MainProblem}
\begin{cases}
-\Delta_p (u) = f(u) &\qquad \text{in }\, \Omega  \\
\qquad\ \ \, u >\,0 &\qquad  \text{in }\, \Omega  \\
\qquad\ \ \, u = \, 0 &\qquad \text{on }\, \partial\Omega, 
\end{cases}
\end{equation}
\par
\smallskip
where 
$\Delta_p
(u)=$
div$(|Du|^{p-2}Du)$ is the $p$-Laplace operator, $1<p<\infty$, and 
\begin{itemize}
\item[$H_1)$  ] \ $f:[0,\infty)\rightarrow \mathbb{R}$ is a  locally Lipschitz  continuous function and  \par
 $f(0) \geq 0$.\\
If $p>2 $ we also suppose that
$f(s)>0 $ if $s>0$.
\end{itemize}
\par
\medskip
The equation $- \Delta _p u = f(u)$ is the $L^p $ counterpart to the classical semilinear elliptic equation  
$- \Delta u = f(u)$,  and appears e.g. in the theory of non-Newtonian fluids (dilatant fluids in the case $p \geq 2$, pseudo-plastic fluids in the case $1<p<2$),  see \cite{AsMa, MaPa1, MaPa2}.
\par
\smallskip
 If $u \in W^{1,p}(\Omega ) \cap L^{\infty} (\Omega )$ is  a weak solution of the problem
\eqref{MainProblem}, then $u \in C^{1,\tau}(\overline {\Omega })$ with $\tau <1$ (see \cite{DiB}, \cite{Lieb}), so that we suppose from the beginning  a $C^1$ regularity for the solution (which is anyway in general only a 
\emph{weak} solution).
\par
Moreover in the applications of a priori estimates to existence of solutions to elliptic problems, a standard setting is the  space of continuous functions, and if $u\in C^0(\overline {\Omega})$ then also $f(u)$ is continuous and the solution $u$ belongs by the cited regularity results to the space $ C^{1,\tau}(\overline {\Omega }) $.
\par
\bigskip
In the case $p=2$, i.e. when the equation under investigation is  $-\Delta (u) = f(u) $,  the problem has been widely studied. \par
After previous classical partial results (see in particular  \cite{BrTu} and the references therein), in 1981 in  two celebrated papers   Gidas and Spruck \cite{GiSp1}, \cite{GiSp2} proved a priori bounds for nonlinearities $f(u)$ that for $N \geq 3$ behave  as a subcritical power at infinity, introducing the {\it blow-up} method together with Liouville type theorems  for solutions in $\R^N$ and in the half space $\R^N_+$.\par
In the same years De Figueiredo, Lions and Nussbaum \cite{DeFLN} obtained a similar result using a different method. 
In convex domains in particular, it is based on the monotonicity results by Gidas, Ni and Nirenberg (\cite{GiNiNi}) obtained by using  the Alexan\-drov-Serrin {\it moving plane} method (\cite{Se}), (providing a priori bounds in a neighborhood of the boundary), on the Pohozaev identity (\cite{Po}) and  on the $L^p$ theory for Laplace equations given by Calderon-Zygmund estimates, see e.g. \cite{GiTr}. 
They extend  then some of the results to arbitrary smooth domains through the Kelvin transform.
\par
In recent years Castro and Pardo (\cite{CaPa}) proved, using the techniques introduced in \cite{DeFLN}, an extension of the results to the case of nonlinearities $f(u)$ more general than functions similar to subcritical powers, showing the flexibility of the method, see also  \cite{CP-ring-like, Mavinga-Pardo_JMAA_2017}. Their arguments rely on  estimating from below the radius   $R$ such that $u(x)\ge\frac12\ \|u\|_\infty=\frac12\  u(x_0)$ for any   $x\in B(x_0,R)$.\par
\medskip
For the case of the $p$-laplacian, Liouville theorems for quasilinear elliptic inequalities in $\R^N$ involving the $p$-laplacian were proved by Mitidieri and Pohozaev (\cite{MiPo1, MiPo2}), and later for more general operators by Serrin and Zou (\cite{SeZo}).\par
Using these and other methods Azizieh and Clement and Ruiz proved in very interesting papers, different versions of a priori estimates for equations of the type
\eqref{MainProblem}.\par
With the help of the blow-up procedure, Azizieh and Clement (\cite{AzCl}, see also \cite{AzClMi} for the case of systems) proved a priori estimates for the equation \eqref{MainProblem} in the case of $\Omega $ being a strictly convex domain, $1<p<2$ and  $f(u)$ growing not faster than a power $u^{\gth}$ at infinity, with 
$$
1< \gth < p_{*}-1 \; , \quad p_{*}= \frac {(N-1)p} {N-p}
$$ 
The   exponent $p_{*}= \frac {(N-1)p} {N-p}$ 
is the optimal  exponent for  Liouville theorems for elliptic \emph{inequalities}  and observe that
$$ p_{*}=  \frac {(N-1)p} {N-p}  \; < \;  p^{*} = \frac {Np}{N-p}
$$
where $p^{*} $ is the critical exponent for the Sobolev's embeddings. 
The restriction $1<p<2 $ depends on the fact that using a blow-up procedure and Liouville theorems on the whole space, they need to exclude concentration of maximum points of the solutions at the boundary, and they use some result proved in {\cite{DaPa, DaPa2} on the symmetry and monotonicity of solutions to $p$-Laplace equations in the singular case $1<p<2$,  results that were later extended to the case $p>2$ in the papers \cite{DaSc1, DaSc2}. \par
\smallskip
Ruiz (\cite{Ru}) proved, using a different technique based among other tools on Harnack type inequalities, a priori estimates for equation more general than \eqref{MainProblem}, therein  $f=f(x,u,Du)$ can  depend on $x$ and on the gradient, for any $1<p<N $ and for general domains. 
Once again the growth at infinity  with respect to $u$ must be less than powers with exponent $ \gth <{p_{*}-1}$.\par
In both papers, there is also a general discussion on how the existence of solutions follows from the a priori estimates, using some abstract results by Krasnoselskij already used in \cite{DeFLN}.
\par
\smallskip
Later, Zou (\cite{Zou}) proved Liouville theorems in half spaces that, together with the results in \cite{SeZo}, allow him to use the blow-up method and prove, in  case $1<p<N $, a priori estimates for equation more general than \eqref{MainProblem}, therein  $f=f(x,u,Du)$ can  depend on $x$ and on the gradient, and under various hypotheses on the nonlinearities. 
In particular,  it is supposed that $f=f(x,u,Du)$ grows with respect to $u$ as a subcritical power at infinity and zero.
\par
For monotonicity and Liouville type theorems in half spaces see also the papers of Farina, Montoro and Sciunzi (\cite{FaMoSc1, FaMoSc2}).
\par
In recent years related a priori estimates for general operators were established by D'Ambrosio and Mitidieri in \cite{DaMi1, DaMi2}.
\par
\bigskip
The aim of this note is to prove a priori estimates for solutions of \eqref{MainProblem} in the case of $\Omega $ being a smooth bounded strictly convex domain, for any value of $p >1$. 
In  the case $1<p<N $,  $f(u)$ is supposed to have a subcritical grow at infinity,  but allowing  more general functions than merely subcritical powers, see Example \ref{cor:apriori:ex}. \par
We use the technique introduced in \cite{DeFLN}, that  allows to give the same proof in  case $1<p<N $,  case $p=N$ and  case $p>N$.\par
Of course in the latter cases, much weaker hypotheses are needed in order to obtain the desired estimates (in particular in case $p>N $ we only need that $f(u)$ grows faster than $u^{p-1}$ at infinity,  condition that for $p=2$ is the superlinearity at infinity).\par
The proofs in \cite{DeFLN}  for the case $p=2$ rely deeply (among many other tools that we had to modify to handle in our case) on the $C^2$ regularity of the solutions, which are then classical solutions, and on the $W^{2,q}$ estimates based on the  Calderon-Zygmund estimates. \par
These estimates are not available in the singular/degenerate case of  $p \neq 2$,  and we think that   the use of regularity properties and other tools, that we exploit to implement the method,  could be interesting also for other problems.
\par
\medskip
Let us state in more detail the results that we prove.
\par
\medskip

\begin{theorem}[Case $p>N$] \label{p>N} Let  $\Omega  $ be a smooth bounded domain in $\R^N$, $N \geq 2$, which is strictly convex.\par
Suppose that $p>N$,  the condition $H_1)$ holds   and 
\begin{itemize}
\item[$H_2)$  ] \  There exist $\tau >0 $ and $C_1>0$ such that 
\par
\smallskip
$\liminf _{s \to +\infty } \frac {f(s)}{s^{p-1+ \tau }}  >C_1>0 $ , 
\par
\smallskip
\end{itemize}
Then, the solutions of \eqref{MainProblem} are a priori bounded in $L^{\infty}$:
there exists a constant $C$, depending on $p$, $\Omega  $ and $f$, but independent of the solution $u$, such that
$\Vert u  \Vert _{L^{\infty }(\Omega )} \leq C$
for any solution of \eqref{MainProblem}.
\end{theorem}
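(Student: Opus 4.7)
The plan is to adapt the De Figueiredo–Lions–Nussbaum scheme to the quasilinear setting by combining the moving planes method, the Picone identity, and the interior $C^{1,\beta}$ regularity of $p$-harmonic functions. Since $p > N$, the Morrey/Sobolev regime is extremely favorable, so the argument can be driven through by controlling the maximum alone. I would argue by contradiction: assume a sequence of solutions $u_n$ with $M_n := \|u_n\|_{L^\infty} \to \infty$.

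First I would invoke the monotonicity/symmetry results for the $p$-Laplacian in strictly convex domains (Damascelli–Pacella for $1 < p < 2$, Damascelli–Sciunzi for $p > 2$) to ensure that every maximum point $x_n$ of $u_n$ stays at a fixed positive distance from $\partial\Omega$: there is $\delta_0 > 0$, depending only on $\Omega$, with $\mathrm{dist}(x_n, \partial\Omega) \ge \delta_0$. In particular, all the analysis near $x_n$ takes place in $\Omega_{\delta_0/2}$, where the first $p$-Laplace eigenfunction $\phi_1$ is bounded below by a positive constant $c_{\delta_0/2}$. Testing the equation against $\phi_1^p / u^{p-1}$ and applying Picone's inequality produces the universal bound
$$
\int_\Omega \frac{f(u)}{u^{p-1}}\,\phi_1^p\,dx \;\le\; \lambda_1 \int_\Omega \phi_1^p\,dx,
$$
whose right-hand side is independent of the particular solution.

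The central geometric step is a rescaling near the maximum. Define $v_n(y) := u_n(x_n + \mu_n y)/M_n$ with $\mu_n^p := M_n^{p-1}/\sup_{[0,M_n]} f$; by $H_2$, $\mu_n \to 0$. Then $-\Delta_p v_n$ is bounded by $1$ on $B(0, \delta_0/\mu_n)$, while $v_n \in [0,1]$, $v_n(0) = 1$, $\nabla v_n(0) = 0$. The uniform interior $C^{1,\beta}$ estimates of Tolksdorf/DiBenedetto yield a universal $r_0 > 0$ such that $v_n \ge 1/2$ on $B(0, r_0)$, hence $u_n \ge M_n/2$ on $B(x_n, \mu_n r_0) \subset \Omega_{\delta_0/2}$ for $n$ large. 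Feeding this lower bound into the Picone inequality — using $\phi_1 \ge c_{\delta_0/2}$ and $f(u_n)/u_n^{p-1} \ge C_1 (M_n/2)^\tau$ from $H_2$ — yields $M_n^\tau\,\mu_n^N \le C$, equivalently
$$
M_n^{\,p\tau/N \,+\, p-1} \;\le\; C\,\sup_{[0, M_n]} f.
$$

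The main obstacle — and precisely where $p > N$ is used — is turning this into a contradiction. The point is that $(p\tau/N) - \tau = \tau(p-N)/N > 0$, so the exponent $p\tau/N + p - 1$ on the left strictly exceeds the minimal growth $p-1+\tau$ forced on $\sup_{[0,M]} f$ by $H_2$; consequently the inequality cannot persist as $M_n \to \infty$, giving the desired a priori bound. A delicate point I would need to handle carefully is that $f$ is not assumed monotone, which forces the appearance of $\sup_{[0, M_n]} f$ rather than $f(M_n)$ in the rescaling, and one has to check that the universal constants from the $C^{1,\beta}$ estimate are indeed independent of $n$ under this scaling. The rest is bookkeeping on the contributions of the region $\{u < s_0\}$ in the Picone integral, which is harmless because $\phi_1^p/u^{p-1}$ is integrable up to the boundary thanks to Hopf-type bounds on both $\phi_1$ and $u$.
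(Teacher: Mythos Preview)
There is a genuine gap in the final step. Your concluding inequality reads
\[
M_n^{\,p\tau/N + p-1} \;\le\; C\,\sup_{[0,M_n]} f,
\]
and you claim this contradicts $H_2)$ because the exponent on the left exceeds $p-1+\tau$. But $H_2)$ only gives a \emph{lower} bound $f(s)\ge c\,s^{p-1+\tau}$; it places no upper bound whatsoever on $\sup_{[0,M_n]} f$. Both sides of your implication are lower bounds on $\sup_{[0,M_n]} f$, so there is nothing to contradict. Concretely, take $f(s)=e^s$ (or $f(s)=s^{p-1+\tau}e^s$): this satisfies $H_1)$ and $H_2)$, yet your inequality becomes $M_n^{A}\le C\,e^{M_n}$, which holds for all $M_n$. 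The difficulty is structural: your rescaling radius $\mu_n$ is chosen via $\sup_{[0,M_n]} f$, so faster-growing $f$ shrinks the ball $B(x_n,\mu_n r_0)$ and the Picone integral over it gives you back exactly $\sup f$, not a useful quantity.

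The paper's route avoids this entirely by not attempting a blow-up. After the Picone bound $\int_\Omega u^\tau\phi_1^p\le C$, it uses the moving-planes cone property (each boundary-strip point $x$ sees a region $I_x\subset\Omega_{\epsilon/2}$ of measure $\ge\gamma$, \emph{independent of $u$}, on which $u\ge u(x)$) to get an $L^\infty$ bound near $\partial\Omega$, then a boundary gradient bound, and then the Pohozaev identity. For $p>N$ the Pohozaev identity reads $N\int F(u)+\frac{p-N}{p}\int uf(u)=\frac{p-1}{p}\int_{\partial\Omega}|\partial_\nu u|^p(x\cdot\nu)$, and since $p>N\ge 2$ forces $f>0$ by $H_1)$, one has $F\ge 0$ and can drop it, yielding $\int|\nabla u|^p=\int uf(u)\le C$ with no growth hypothesis on $f$ from above. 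The Sobolev embedding $W_0^{1,p}\hookrightarrow L^\infty$ finishes. Your approach could be rescued under an additional polynomial upper bound on $f$ (as in hypothesis $H_5)$ of Theorem~\ref{1<p<N refined}, which is exactly what tames $\sup f$ versus $f(M_n)$), but as stated Theorem~\ref{p>N} assumes only $H_1)$ and $H_2)$.
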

\par
\bigskip

\begin{remark} For the ordinary laplacian  ($p=2$), the above theorem corresponds to the case of dimension $N=1$, and in \cite[Remark 1.3]{DeFLN} it was observed that if $N=1$, solutions are uniformly bounded under the only hypothesis of superlinearity at infinity, which corresponds for $p = 2$ to the hypothesis $H_2)$ 
with $\tau =0$, and the bound from below strictly bigger than $\lambda_{1}$,  the first eigenvalue for the  Laplacian operator. \par
We need the slightly stronger form (with $\tau >0 $ but arbitrarily small) for technical reasons (use of the Picone's Identity for the $p$-laplacian).\par
\end{remark}
\par
\bigskip

\begin{theorem}[Case $p=N$] \label{p=N} Let  $\Omega  $ be a smooth bounded domain in $\R^N$, $N \geq 2$, which is strictly convex.\par
Let $p=N$ and suppose that $H_1)$ and $H_2)$   hold,   as well as 
\begin{itemize}  
\item[$H_3)$  ] There exists $C_2>0$ such that\\  $\liminf _{s \to +\infty } \frac {F(s)} {sf(s)}  > C_2>0 $ \par
where $F(t)= \int _0 ^t f(s) \, ds \ $ is a primitive of the function $f$.
\item[$H_4)$  ] There exists $\theta >0$  such that
\par
\smallskip
$ \limsup _{s \to +\infty } \frac {|f(s)|} {s^{\theta}}    < + \infty $ \par
(of course this is equivalent to \\ There exists $\eta >0$  such that
$\lim _{s \to +\infty } \frac {|f(s)|} {s^{\eta}}    =0 $  )
\end{itemize}
Then, the solutions of \eqref{MainProblem} are a priori bounded in $L^{\infty}$:
there exists a constant $C$, depending on $p$, $\Omega  $ and $f$, but independent of the solution $u$, such that
$\Vert u  \Vert _{L^{\infty }(\Omega )} \leq C$
for any solution of \eqref{MainProblem}.
\end{theorem}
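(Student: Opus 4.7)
The plan is to adapt the De Figueiredo--Lions--Nussbaum strategy \cite{DeFLN}, together with the Castro--Pardo refinement \cite{CaPa}, replacing at each step the linear tools by their $p$-Laplacian analogues, and to argue by contradiction, supposing a sequence of solutions $u_n$ of \eqref{MainProblem} with $M_n:=\|u_n\|_{L^\infty(\Omega)}\to\infty$. The moving plane method for the $p$-Laplacian in strictly convex domains \cite{DaPa,DaPa2,DaSc1,DaSc2} yields a universal $\delta>0$ such that each $u_n$ is monotone along inward normals on the boundary strip $\Omega_\delta=\{x\in\Omega:\mathrm{dist}(x,\partial\Omega)<\delta\}$: in particular, the maxima of $u_n$ are attained in $\Omega\setminus\Omega_\delta$; a reflection controls $u_n$ on $\Omega_\delta$ in terms of its interior values; and, in the spirit of \cite{CaPa}, one obtains a quantitative lower bound on the radius $R_n$ of a ball centred at a maximum point on which $u_n\geq M_n/2$.

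Next, the Picone identity for the $p$-Laplacian applied to $u_n$ and the first positive eigenfunction $\varphi_1$ of $-\Delta_p$ on $\Omega$ (with eigenvalue $\lambda_1$) produces
\[
\int_\Omega \frac{f(u_n)}{u_n^{p-1}}\,\varphi_1^p\,dx \;\leq\; \lambda_1\int_\Omega \varphi_1^p\,dx.
\]
Hypothesis $H_2$ then turns this into $\int_\Omega u_n^{\tau}\varphi_1^p\,dx\leq C$, and the moving-plane reflection removes the weight, yielding $\int_\Omega u_n^{\tau}\,dx\leq C$.

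The Pohozaev identity for $-\Delta_p u=f(u)$,
\[
\frac{N-p}{p}\int_\Omega u f(u)\,dx - N\int_\Omega F(u)\,dx + \frac{p-1}{p}\int_{\partial\Omega}|Du|^p\,(x-x_0)\cdot\nu\,d\sigma=0,
\]
simplifies crucially at $p=N$, where its first term vanishes and $\int_\Omega F(u_n)\,dx$ is controlled by the boundary gradient integral alone. The boundary gradient is bounded via the $C^{1,\alpha}$-regularity of \cite{Lieb,DiB} (using $H_4$ to control $\|f(u_n)\|_{L^\infty}$ polynomially in $M_n$), and $H_3$ then promotes this to an upper bound on $\int_\Omega u_n f(u_n)\,dx$ in terms of a power of $M_n$. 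Testing the equation with $u_n$ converts this into a quantitative $W^{1,N}_0$-bound. Since $p=N$ the critical Sobolev embedding degenerates, and the Moser--Trudinger inequality is used instead to conclude $u_n\in L^q(\Omega)$ for every $q<\infty$ with quantitative bounds, whence $\|f(u_n)\|_{L^q}$ is likewise controlled by $H_4$. Confronting these upper bounds with the Castro--Pardo lower bound $\int_\Omega u_n f(u_n)\,dx\geq c\,M_n^{N+\tau}R_n^N$ (via $H_2$) and with the $L^\tau$-bound of the previous step in the form $M_n^\tau R_n^N\leq C$ forces $M_n$ to remain bounded, a contradiction.

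The main obstacle is precisely the simultaneous management of these three estimates---the Picone-type weighted $L^\tau$ control, the Pohozaev-type boundary/volume exchange, and the Castro--Pardo-type super-level-set estimate---without access to the $W^{2,q}$ Calder\'on--Zygmund machinery used for $p=2$ in \cite{DeFLN}. Quantitatively controlling the boundary $C^{1,\alpha}$-norm of $u_n$ in terms of $\|u_n\|_{L^\infty(\Omega_\delta)}$ and $\|f(u_n)\|_{L^\infty(\Omega)}$, and balancing the resulting power of $M_n$ against the possibly small exponent $\tau$ of $H_2$ and the polynomial growth allowed by $H_4$, is the delicate step where the degenerate nature of the $p$-Laplacian most strongly departs from the classical Laplacian case and where the role of $H_3$, $H_4$ in the borderline regime $p=N$ becomes essential.
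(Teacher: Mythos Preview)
Your outline has a genuine gap at the point where you pass from the Picone estimate to the Pohozaev argument. From Picone plus the moving-plane method in a strictly convex domain one can obtain much more than the integral bound $\int_\Omega u_n^{\tau}\,dx\leq C$ that you extract: the cone property (Lemma~\ref{LemmaMovingPlane} in the paper) says that for each $x$ in a fixed boundary strip $\Omega\setminus\Omega_\epsilon$ there is a set $I_x\subset\Omega_{\epsilon/2}$ of measure $\geq\gamma>0$ on which $u\geq u(x)$. Combined with $\int_\Omega u^{\tau}\phi_1^p\leq C$ and $\inf_{\Omega_{\epsilon/2}}\phi_1>0$, this gives a \emph{uniform pointwise} bound $\|u\|_{L^\infty(\Omega\setminus\Omega_\epsilon)}\leq C$, independent of the solution. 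Once this is in hand, $f(u)$ is uniformly bounded near $\partial\Omega$, and a simple comparison with the $p$-torsion function (Theorem~\ref{BoundaryEstimates}) yields a uniform bound on $|\nabla u|$ on $\partial\Omega$. The Pohozaev identity (whose volume term vanishes at $p=N$) together with $H_3)$ then gives $\int_\Omega uf(u)=\|\nabla u\|_{L^N}^N\leq C$ \emph{with no power of $M_n$ anywhere}. Standard Sobolev embedding (not Moser--Trudinger) gives uniform $L^q$ bounds for every $q<\infty$; $H_4)$ transfers these to $f(u)$; gradient regularity (Theorem~\ref{th:RegularityGradient}) puts $\nabla u$ in some $L^r$ with $r>N$ uniformly; and Morrey finishes. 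No contradiction argument, no tracking of powers of $M_n$, and no Castro--Pardo super-level-set estimate are needed.

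By contrast, your route bounds the boundary gradient via global $C^{1,\alpha}$ regularity, invoking $H_4)$ to control $\|f(u_n)\|_{L^\infty(\Omega)}$ polynomially in $M_n$. This loses the game: the Pohozaev boundary term then carries a factor $M_n^{N\theta/(N-1)}$, and since $\theta$ in $H_4)$ is an arbitrary (possibly large) exponent, your subsequent balancing of this against the $H_2)$-based lower bound $\int u_nf(u_n)\geq cM_n^{N+\tau}R_n^N$ and the constraint $M_n^{\tau}R_n^N\leq C$ cannot be closed in general. The Castro--Pardo machinery you invoke is precisely what the paper reserves for Theorem~\ref{1<p<N refined} (the case $1<p<N$ under the weaker hypotheses $H_3'')$, $H_4'')$), and even there it is launched only \emph{after} the uniform boundary $L^\infty$ bound has been secured by the same cone argument. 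For the present theorem that extra layer is unnecessary, and as you have set it up, insufficient.
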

\par
\bigskip

\begin{remark} If $p=2$ (the ordinary laplacian), the above theorem corresponds to  the case of dimension $N=2$, and in that case  solutions are uniformly bounded under the only hypotheses of  superlinearity  together with  polynomial growth at infinity,  cf. \cite[Theorem 1.1]{DeFLN}. All the functions $f$  growing polynomially at infinity  are included  in hypotheses $H_3)$, and $H_4)$.\par
Nevertheless, when $p=N$ the critical embedding is of exponential type, and those hypotheses are not  optimal (neither are the hypotheses in \cite{DeFLN}), and we think that they can be  improved. 
\end{remark}
\par
\bigskip

\begin{theorem}[Case $1<p<N$, first result] \label{1<p<N} Let  $\Omega  $ be a smooth bounded domain in $\R^N$, $N \geq 2$, which is strictly convex.\par
Let $1<p<N$,  and suppose that  $H_1)$ and $H_2)$   hold,   as well as 
\begin{itemize}
\item[$H_3')$  ] There exists $C_3>0$ such that\\  $\liminf _{s \to +\infty } \frac {p^* F(s)- s f(s)} {sf(s)}  >C_3>0 $
\par
\smallskip
\item[$H_4')$  ] \  
$\lim _{s \to +\infty } \frac {f(s)} {s^{p^*-1} }  = 0 $
\end{itemize}
where $p^*= \frac {Np}{N-p}$. \par
Then, the solutions of \eqref{MainProblem} are a priori bounded in $L^{\infty}$:
there exists a constant $C$, depending on $p$, $\Omega  $ and the function $f$, but independent of the solution $u$, such that
$\Vert u  \Vert _{L^{\infty }(\Omega )} \leq C$
for any solution of \eqref{MainProblem}.
\end{theorem}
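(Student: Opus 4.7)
The plan is to argue by contradiction, following the De Figueiredo--Lions--Nussbaum \cite{DeFLN} strategy that underlies Theorems \ref{p>N} and \ref{p=N}, and to adapt it to the Sobolev-subcritical range $1<p<N$. Suppose there is a sequence of positive solutions $\{u_n\}$ of \eqref{MainProblem} with $M_n:=\|u_n\|_{L^{\infty}(\Omega)}\to\infty$. The first ingredient is the moving plane method for the $p$-Laplacian on strictly convex domains (\cite{DaPa,DaPa2} for $1<p<2$ and \cite{DaSc1,DaSc2} for $p\ge 2$): it gives a uniform $\delta_0>0$ and a compact $K\Subset\Omega$, independent of $n$, such that each $u_n$ is monotone along inward normals in a cap of width $\delta_0$ near $\partial\Omega$, its maximum is attained in $K$, and its values in the boundary strip are dominated by those in $K$.

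Next I invoke the Pohozaev identity for the $p$-Laplacian. Placing the origin inside $\Omega$ so that $c_0:=\min_{\partial\Omega}(x\cdot\nu)>0$ by strict convexity, testing the equation against $u$ to obtain $\int_\Omega|\nabla u|^{p}\,dx=\int_\Omega u f(u)\,dx$, and applying Pohozaev, one gets
\begin{equation*}
\frac{p-1}{N-p}\int_{\partial\Omega}|\nabla u|^{p}(x\cdot\nu)\,d\sigma \;=\; \int_\Omega\bigl(p^{*}F(u)-u f(u)\bigr)\,dx.
\end{equation*}
Splitting $\Omega$ along $\{u\le M\}\cup\{u>M\}$ and using $H_{3}')$ on the second set yields, with uniform constants $C_{3}'>0$ and $C>0$,
\begin{equation*}
C_{3}'\int_\Omega u f(u)\,dx \;\le\; \int_{\partial\Omega}|\nabla u|^{p}(x\cdot\nu)\,d\sigma \;+\; C.
\end{equation*}

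The third and most delicate step transfers the boundary gradient integral to an interior quantity. Using the moving-planes monotonicity in caps together with the boundary $C^{1,\tau}$ regularity of Lieberman \cite{Lieb} and Hopf-type barrier comparisons, one should control $|\nabla u|$ on $\partial\Omega$ by $\|u\|_{L^{\infty}(K')}$ for some intermediate compact set $K'\Supset K$, which is in turn bounded by $\max_K u$ via the first step. Inserting this into the Pohozaev inequality, combining with the Sobolev embedding $W^{1,p}_{0}(\Omega)\hookrightarrow L^{p^{*}}(\Omega)$, and using the subcritical bound $f(s)\le\varepsilon s^{p^{*}-1}+C_\varepsilon$ coming from $H_{4}')$, should first produce a uniform bound on $\|u_n\|_{L^{p^{*}}(\Omega)}$. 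A Moser-type iteration built on the interior $p$-Laplace regularity of \cite{DiB} then lifts this to a uniform $L^{\infty}$ bound, contradicting $M_n\to\infty$.

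I expect the main obstacle to be precisely this boundary-to-interior transfer of the gradient: for $p=2$ it is where the Calderón--Zygmund $W^{2,q}$ estimate enters in \cite{DeFLN}, and no analogue is available for $p\ne 2$. Its substitute should rest on the moving-planes monotonicity, the Lieberman boundary regularity, barrier comparisons, and likely the Picone identity for the $p$-Laplacian signalled in the remark after Theorem \ref{p>N}.
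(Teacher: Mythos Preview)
Your outline has the right architecture (moving planes, Pohozaev, subcriticality via $H_4')$, bootstrap), but the step you yourself flag as the ``main obstacle'' is a genuine gap, and your proposed resolution is circular. You want to bound $|\nabla u|$ on $\partial\Omega$ by $\|u\|_{L^\infty(K')}$, which ``is in turn bounded by $\max_K u$''. But if $K'\Supset K$ and the maximum is attained in $K$, then $\|u\|_{L^\infty(K')}=\max_K u = M_n\to\infty$; so this yields no uniform control on the boundary term in Pohozaev, and the chain collapses. Monotonicity from moving planes alone only tells you that values in the cap are dominated by interior values; it does \emph{not} bound those values.

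The missing idea is to obtain a uniform $L^\infty$ bound on $u$ in a fixed boundary strip \emph{before} invoking Pohozaev, and this is exactly where Picone's identity enters. Applying Picone with $v_1=\phi_1$, $v_2=u$ gives $\int_\Omega \frac{f(u)}{u^{p-1}}\phi_1^p\le\lambda_1$, and $H_2)$ then yields a uniform bound on $\int_\Omega u^\tau\phi_1^p$. The moving-planes cone property (Lemma~\ref{LemmaMovingPlane}) converts this integral bound into a uniform pointwise bound on $u$ in $\Omega\setminus\Omega_\delta$. Now a direct barrier comparison (Theorem~\ref{BoundaryEstimates}: compare with a large multiple of the solution of $-\Delta_p v=1$) gives $\|\nabla u\|_{L^\infty(\partial\Omega)}\le C$ uniformly. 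With this in hand, Pohozaev plus $H_3')$ gives $\int_\Omega|\nabla u|^p=\int_\Omega uf(u)\le C$, and the rest proceeds. So the logical order is: Picone $\Rightarrow$ boundary-strip $L^\infty$ bound $\Rightarrow$ boundary gradient bound $\Rightarrow$ Pohozaev $\Rightarrow$ $W^{1,p}$ bound, not the reverse.

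Two smaller remarks. First, the contradiction framing is unnecessary: once the boundary gradient is uniformly bounded, everything is direct. Second, for the final bootstrap the paper does not use Moser iteration on \cite{DiB}; instead it runs a Brezis--Kato argument (test with $u^t$, use $H_4')$ and H\"older to absorb) to get all $L^q$ bounds, and then invokes the global gradient regularity of Theorem~\ref{th:RegularityGradient} (Iwaniec/Byun--Wang--Zhou type): $f(u)\in L^q$ with $q>N/p$ forces $|\nabla u|\in L^r$ with $r>N$, hence $u\in L^\infty$ by Morrey. This substitutes for the Calder\'on--Zygmund step that is unavailable when $p\neq 2$.
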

\par
\bigskip

\begin{remark}
In case of the ordinary laplacian  ($p=2$), the above theorem corresponds to   the case of dimension $N\geq 3$, and in \cite{DeFLN} it was proved  under similar hypotheses.\par
We include this version here, because  the first three theorems share the same proof (and it corresponds to the \cite{DeFLN} hypotheses). \par
 Nevertheless, we also prove another result (see Theorem \ref{1<p<N refined} that follows),  weakening the hypotheses needed for the result (except for a further technical hypothesis, $H_5)$,  which is satisfied for a general class of nonlinearities) and  extends the class of nonlinearities allowed, including functions $f$ more general than subcritical powers, and can be seen as the counterpart for $p \neq 2$ to the results in \cite{CaPa} in case $p = 2$.
\end{remark}
\par
\bigskip

\begin{theorem}[Case $1<p<N$  second result] \label{1<p<N refined} Let  $\Omega  $ be a smooth bounded domain in $\R^N$, $N \geq 2$, which is strictly convex.\par
Let $1<p<N$,  and suppose that  $H_1)$ and $H_2)$  hold,   as well as 
\begin{itemize}
\item[$H_3$'') ] \  There exist a nonincreasing positive function
$H:[0, + \infty ) \to \R $  such that 
\par
\smallskip
$\liminf _{s \to +\infty } \frac {p^* F(s)- s f(s)} {H(s) sf(s)}   >0 $ 
\par
\smallskip
\item[$H_4$'') ] \  
$\lim _{s \to +\infty } \frac {f(s)} {s^{p^*-1}[H(s)]^{\frac p{N-p}} }  = 0 $
\par
\smallskip
\item[$H_5)$ ] \  There exist $C_4 >0 $, $ C_5 >0 $ such that
\par
\smallskip
$\liminf _{s \to +\infty } \frac {\min _{[\frac s2,s]}f} {f(s)}   \geq C_4 >0 $ \par
$\limsup _{s \to +\infty } \frac {\max _{[0,s] } f } {f(s)}   \leq C_5 $
\end{itemize}
Then, the solutions of \eqref{MainProblem} are a priori bounded in $L^{\infty}$:
there exists a constant $C$, depending on $p$, $\Omega  $ and the function $f$, but independent of the solution $u$, such that
$\Vert u  \Vert _{L^{\infty }(\Omega )} \leq C$
for any solution of \eqref{MainProblem}.
\end{theorem}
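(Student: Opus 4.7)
The proof proceeds by contradiction along the scheme of \cite{DeFLN} adapted to the $p$-Laplacian, as in the Castro--Pardo approach \cite{CaPa} for the case $p=2$. Suppose there is a sequence $\{u_n\}$ of solutions of \eqref{MainProblem} with $M_n := \|u_n\|_{L^\infty} = u_n(x_n) \to \infty$. The first task is to obtain the integral bound $\int_\Omega H(u_n)\,u_n f(u_n)\,dx \leq C$. For this I would apply the moving-plane method for the $p$-Laplacian in strictly convex domains (Damascelli--Pardo \cite{DaPa, DaPa2} for $1<p<2$, Damascelli--Sciunzi \cite{DaSc1, DaSc2} for $p>2$) to produce $\delta > 0$, depending only on $\Omega$, such that every $u_n$ is monotone along directions normal to $\partial\Omega$ in $\Omega_\delta := \{x \in \Omega : \mathrm{dist}(x,\partial\Omega) < \delta\}$; this confines the maxima $x_n$ to the compact set $K := \overline{\Omega \setminus \Omega_\delta}$ and, combined with a reflection argument and the $C^{1,\alpha}$ regularity of DiBenedetto--Tolksdorf--Lieberman, yields a uniform upper bound on the boundary integral appearing in the Pohozaev identity
\[
\int_\Omega \bigl[p^* F(u_n) - u_n f(u_n)\bigr]\,dx = \frac{p-1}{N-p}\int_{\partial\Omega} |Du_n|^p\,(x-x_0)\cdot\nu\,d\sigma,
\]
with $x_0$ chosen so that $(x - x_0)\cdot\nu \geq c_0 > 0$ on $\partial\Omega$ (possible by strict convexity). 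Picone's identity for the $p$-Laplacian applied with the first positive Dirichlet eigenfunction $\varphi_1$ of $-\Delta_p$ provides the complementary estimate $\int_\Omega (f(u_n)/u_n^{p-1})\,\varphi_1^p\,dx \leq \lambda_1 \|\varphi_1\|_{L^p}^p$, used to reabsorb interior terms in the boundary bound. Invoking $H_3'')$, which gives $p^* F(s) - s f(s) \geq c\, H(s)\, s f(s)$ for $s$ large, then yields the uniform bound $\int_\Omega H(u_n)\,u_n f(u_n)\,dx \leq C$.

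The second task is to produce a ball on which $u_n$ stays close to $M_n$. Let $R_n > 0$ be the largest radius with $u_n \geq M_n/2$ on $B(x_n, R_n)$, and set $\rho_n := (M_n^{p-1}/f(M_n))^{1/p}$, which tends to $0$ by $H_2)$. The rescaled function $v_n(y) := u_n(x_n + \rho_n y)/M_n$ satisfies $-\Delta_p v_n = f(M_n v_n)/f(M_n)$ on the expanding domains $(\Omega - x_n)/\rho_n$, with $v_n(0) = 1$ and $0 < v_n \leq 1$. By $H_5)$, the right-hand side is bounded above by $C_5$ on $\{v_n \leq 1\}$ and below by $C_4$ on $\{v_n \geq 1/2\}$, so the interior $C^{1,\alpha}$ estimates for the $p$-Laplacian yield a uniform Lipschitz bound for $v_n$ on $B(0,1)$. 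Since $v_n(0) = 1$, this forces $v_n \geq 1/2$ on $B(0, r_0)$ for some $r_0 > 0$ independent of $n$, whence $R_n \geq r_0 \rho_n$.

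Finally, using the monotonicity of $H$ and $H_5)$ on $B(x_n, R_n)$,
\[
C \geq \int_{B(x_n,R_n)} H(u_n)\,u_n f(u_n)\,dx \geq \tfrac{C_4}{2}\,\omega_N\,H(M_n)\,M_n\,f(M_n)\,R_n^N,
\]
and substituting $R_n \geq r_0 \rho_n$ together with the algebraic identity $1 + N(p-1)/p = (p^*-1)(N-p)/p$ gives
\[
C \geq c\,H(M_n) \left(\frac{M_n^{p^*-1}}{f(M_n)}\right)^{\!(N-p)/p},
\]
equivalently $f(M_n)/\bigl(M_n^{p^*-1}\,[H(M_n)]^{p/(N-p)}\bigr) \geq c > 0$, which contradicts $H_4'')$ as $M_n \to \infty$. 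I expect the main obstacle to be the uniform control of the boundary integral in the Pohozaev identity when $p \neq 2$: in the semilinear case this is routinely obtained via Calder\'on--Zygmund estimates, but here it demands a careful interplay between moving-plane monotonicity, Picone's identity and the DiBenedetto--Tolksdorf--Lieberman $C^{1,\alpha}$ theory, which constitutes the main technical novelty of the argument.
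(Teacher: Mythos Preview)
Your argument is correct and reaches the same contradiction as the paper, but the route you take to the lower bound on $R_n$ is genuinely different and, in one respect, more elementary. After the common first step (Picone's identity, the moving-planes cone argument, the boundary gradient bound, and Pohozaev together with $H_3'')$ giving $\int_\Omega H(u_n)\,u_n f(u_n)\,dx \le C$), the paper proceeds by deriving the intermediate estimate $\int_\Omega |f(u)|^{(p^*)'} H(u)^{\frac{N(p-1)}{N(p-1)+p}}\,dx \le C$, feeding it into the global $W^{1,r}$ gradient regularity result of Iwaniec--DiBenedetto--Manfredi--Byun--Wang--Zhou (Theorem~\ref{th:RegularityGradient}) with $r>N$, and then using Morrey's inequality to bound the oscillation of $u_k$ and extract the lower bound on $R_k$. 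You bypass all of this: the rescaling $v_n(y)=u_n(x_n+\rho_n y)/M_n$ with $\rho_n^p=M_n^{p-1}/f(M_n)$ gives an equation with right-hand side uniformly bounded thanks to $H_5)$, so the interior $C^{1,\alpha}$ estimates of DiBenedetto--Tolksdorf directly yield a uniform Lipschitz bound on $v_n$ near the origin and hence $R_n\ge r_0\rho_n$. This avoids Theorem~\ref{th:RegularityGradient} entirely and makes the role of $H_5)$ more transparent. What the paper's approach buys, on the other hand, is an explicit quantitative dependence of $R_k$ on $H(\|u_k\|)$ and $\max_{[0,\|u_k\|]}f$, closer in spirit to the original Castro--Pardo argument. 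One small caution on your first paragraph: the logical order is Picone $\Rightarrow$ $\int u^\tau\phi_1^p\le C$ $\Rightarrow$ (via the cone argument from moving planes) uniform $L^\infty$ bound in a boundary strip $\Rightarrow$ boundary gradient bound by comparison $\Rightarrow$ bound on the Pohozaev boundary integral; your sketch has the ingredients but places Picone after the boundary bound, which is backwards.
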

\par
\bigskip
The existence of solutions for \eqref{MainProblem}  follows from the a priori estimates,  with a further hypothesis about the behavior of the nonlinearity at zero. \par
This was   proved  in \cite{DeFLN} (with  the hypothesis $H_0$ below)  for $p=2$, using some variants of topological arguments, connected with theorems of Krasnoselskii (\cite{Kr}) and Rabinowitz (\cite{Ra})  based on degree theory.   It was  extended to the case of $p$-Laplace equations in \cite{AzCl, Ru, Zou}.\par
It also can be adapted to our hypotheses. More precisely, the following result holds.

\begin{theorem}\label{Existence}
Let us suppose that the hypotheses of one of the previous theorem hold, and assume also that \par
\medskip
\begin{itemize}
\item[$H_0)$ ]   $      \limsup  _{s \to 0^+ } \frac {f(s)}{s^{p-1 }}  < \gl _1 $
\end{itemize}
where $\gl _1 $ is the first eigenvalue for the $p$- Laplacian (see Section 2).
\par
\medskip
(since $f(0) \geq 0 $ by $H_1)$,  this hypothesis implies that  $f(0)=0$) 
\par
\medskip  
Then, there exists a positive solution of \eqref{MainProblem}.
\end{theorem}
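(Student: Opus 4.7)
The plan is to find positive solutions of \eqref{MainProblem} as non-trivial fixed points, in the positive cone $P=\{u\in C(\overline\Omega):u\ge 0\}$, of the solution operator
\[
T:P\to P,\qquad T(u)=v\ \text{ where }\ v\in W^{1,p}_0(\Omega),\ -\Delta_p v=f(u)\ \text{ weakly in }\ \Omega.
\]
Well-posedness of $T$ follows from the usual solvability and comparison for $-\Delta_p$ together with the $C^{1,\tau}(\overline\Omega)$ regularity quoted in Section~1, which make $T$ continuous and compact on $C(\overline\Omega)$; $f(0)\ge 0$ and weak comparison give $T(P)\subset P$, and V\'azquez's strong maximum principle shows that any non-trivial fixed point is strictly positive in $\Omega$, hence a solution of \eqref{MainProblem}. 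The existence of such a fixed point will then be extracted from a Krasnoselskii/Rabinowitz fixed-point-index argument on $P$: I will exhibit $0<r<R$ for which
\[
i(T,P\cap B_r,P)=1\qquad\text{and}\qquad i(T,P\cap B_R,P)=0,
\]
so that additivity produces a fixed point in $P\cap(B_R\setminus\overline{B_r})$.

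The small-ball index is obtained from $H_0)$. Choose $\beta\in(0,\lambda_1)$ and $\delta>0$ with $f(s)\le\beta s^{p-1}$ on $[0,\delta]$, where $\lambda_1$ is the first eigenvalue of $-\Delta_p$ on $W^{1,p}_0(\Omega)$. If $u=\theta\,T(u)$ for some $\theta\in[0,1]$ and $u\in P$ with $\|u\|_\infty\le\delta$, then $-\Delta_p u=\theta^{p-1}f(u)$, and testing with $u$ together with the Poincar\'e/variational inequality $\lambda_1\int u^p\le\int|Du|^p$ gives
\[
\lambda_1\int_\Omega u^p\,dx\le\theta^{p-1}\int_\Omega f(u)\,u\,dx\le\beta\int_\Omega u^p\,dx,
\]
forcing $u\equiv 0$. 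For any $r\in(0,\delta)$ the homotopy $\theta\mapsto\theta T$, $\theta\in[0,1]$, is therefore admissible on $\partial(P\cap B_r)$, and homotopy invariance yields $i(T,P\cap B_r,P)=i(0,P\cap B_r,P)=1$.

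The large-ball index uses the a priori bound from the appropriate one of Theorems~\ref{p>N}--\ref{1<p<N refined}. For $\sigma\ge 0$ consider the perturbed problem $-\Delta_p u=f(u)+\sigma$ in $\Omega$ with $u=0$ on $\partial\Omega$, whose positive solutions are the fixed points of a corresponding compact operator $T_\sigma$ (with $T_0=T$). Since $H_2)$ and $H_3)$--$H_5)$ are asymptotic conditions as $s\to+\infty$ and the bounded perturbation $\sigma$ does not affect them, the relevant a priori bound applies uniformly in $\sigma\in[0,\sigma_0]$ and furnishes a $\sigma$-independent constant $C^*$ with $\|u\|_\infty\le C^*$. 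On the other hand, Picone's identity applied with the positive first eigenfunction $\varphi_1$ to any $u>0$ solving the perturbed equation yields
\[
\lambda_1\int_\Omega\varphi_1^p\,dx\;\ge\;\int_\Omega\frac{f(u)+\sigma}{u^{p-1}}\,\varphi_1^p\,dx,
\]
whose right-hand side is unbounded in $\sigma$ once $u\le C^*$, so no solution can exist for $\sigma$ larger than a suitable threshold $\sigma_0$. Taking $R>\max(C^*,r)$, the homotopy $\sigma\mapsto T_\sigma$, $\sigma\in[0,\sigma_0]$, has no fixed point on $\partial(P\cap B_R)$, hence $i(T,P\cap B_R,P)=i(T_{\sigma_0},P\cap B_R,P)=0$.

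The step I expect to be the main technical obstacle is the $\sigma$-uniformity of the a priori bound: although $H_2)$--$H_5)$ are qualitatively preserved under adding a constant to $f$, one must reinspect the proofs of Theorems~\ref{p>N}--\ref{1<p<N refined} to check that their constants depend only on the quantitative data appearing in those hypotheses, and are thus uniform for $\sigma\in[0,\sigma_0]$. Once this uniformity is secured, additivity of the fixed-point index combined with V\'azquez's strong maximum principle produces a strictly positive $C^{1,\tau}(\overline\Omega)$ solution of \eqref{MainProblem}, completing the proof.
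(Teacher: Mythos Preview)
Your overall strategy coincides with the paper's (fixed-point index in the cone $P\subset C(\overline\Omega)$, small radius via $H_0)$, large radius via the a priori bounds for the perturbed problem $-\Delta_p u=f(u)+\sigma$ together with nonexistence for large $\sigma$), but two points need repair. First, for $1<p\le 2$ the hypothesis $H_1)$ does \emph{not} force $f\ge 0$ on $[0,\infty)$: only $f(0)\ge 0$ is assumed, and $H_2)$ gives $f(s)>0$ only for large $s$. Thus for $u\in P$ the function $f(u)$ may change sign and the solution $v$ of $-\Delta_p v=f(u)$ need not be nonnegative, so your claim $T(P)\subset P$ can fail. The paper handles this by picking $\Lambda\ge 0$ with $f(s)+\Lambda s^{p-1}\ge 0$ for all $s\ge 0$ (see \eqref{Semipositivita}) and defining the fixed-point operator via $-\Delta_p v+\Lambda v^{p-1}=f(u)+\Lambda u^{p-1}$, whose right-hand side is nonnegative whenever $u\in P$; the small-ball computation is then carried out for the resulting equation $-\Delta_p u=s^{p-1}f(u)-\Lambda(1-s^{p-1})u^{p-1}$, where the extra term has the favourable sign.

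Second, your selection of $\sigma_0$ is circular as written: you first invoke a bound $\|u\|_\infty\le C^*$ uniform on $[0,\sigma_0]$, and then feed $u\le C^*$ into Picone's inequality to produce a nonexistence threshold---but that threshold may well exceed $\sigma_0$, while $C^*$ itself depends on $\sigma_0$. The paper breaks this circle by observing that the \emph{boundary} $L^\infty$ estimate from Step~2 of the proofs of Theorems~\ref{p>N}--\ref{1<p<N refined} is actually uniform for all $\lambda\ge 0$ (see \eqref{costanteuniversale}); applying Picone only on the strip $\Omega\setminus\Omega_\epsilon$, where $u$ is controlled independently of $\lambda$, then yields an absolute $\lambda_0$ with no solutions for $\lambda\ge\lambda_0$ (Lemma~\ref{costantelambda0}), and only afterwards is the full a priori bound used, on the now-fixed interval $[0,\lambda_0]$, to choose $R$.
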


\par
\bigskip

The paper is organized as follows. \par
In Section 2 we recall, and in some cases prove, all the auxiliary results that we need in  the sequel.\par
In Section 3 we give the proofs of  the a priori estimates stated in Theorems \ref{p>N}--\ref{1<p<N refined} and we give an example of an almost critical nonlinearity in the case $1<p<N$ that can be handled with the help of Theorem \ref{1<p<N refined}, but not with the previous theorems, nor with the blow-up method that relies on Liouville theorems with functions $f$ that  behave exactly as a subcritical power at infinity.
\par 
Finally in Section 4  we prove  Theorem  \ref{Existence}.  \par
\medskip

\section{Preliminaries}

\subsection{Strong maximum principles and Hopf's Lemma.  Monotonicity of the solutions and moving planes method.} \ 
\par
\bigskip

Let us first recall  a particular version of the Strong Maximum Principle 
and of the Hopf's Lemma  for
the $p$-laplacian (see \cite{Va} for the case of the $p$-laplacian and
\cite{PuSe}, \cite{PuSeZ}  for  general  quasilinear elliptic
operators).

\begin{theorem}(Strong Maximum Principle and Hopf's Lemma).\label{hop}
Let $\Omega$ be a domain in $\mathbb{R}^N$ and suppose that $u \in C^1(\Omega)$, $u \geqslant 0$
in $\Omega$, weakly solves
\[
-\Delta_p u+cu^q=g \geqslant 0 \quad \text{in  }\quad \Omega
\]
with $1 < p < \infty$, $q \geqslant p-1$, $c \geqslant 0$ and $g \in L^\infty_{loc}(\Omega)$. If
$u \neq 0$ then $u >0$ in $\Omega$. Moreover for any point $x_0 \in \partial \Omega$ where the
interior sphere condition is satisfied, and such that $u \in C^1(\Omega \cup \{x_0\})$ and
$u(x_0)=0$ we have that $\frac{\partial u}{\partial \nu}(x_0)>0$ for any inward directional derivative
(this means that if $y$ approaches $x_0$ in a ball $B \subseteq \Omega$ that has $x_0$ on its
boundary, then $\lim_{y \rightarrow x_0}\frac{u(y)-u(x_0)}{|y-x_0|}>0$).
\end{theorem}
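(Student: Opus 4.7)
The plan is to follow the classical V\'azquez-type barrier argument, adapted to the present equation with absorption. First, since $u \geq 0$, $c \geq 0$ and $q \geq p - 1$, on any compact $K \Subset \Omega$ the function $a_K(x) := c\, u(x)^{q-(p-1)}$ is nonnegative and bounded (by the $C^1$ regularity), and $cu^q = a_K(x)\, u^{p-1}$, so
\begin{equation*}
-\Delta_p u + a_K(x)\, u^{p-1} = g \geq 0 \qquad \text{in } K.
\end{equation*}
Hence it suffices to prove both conclusions for nonnegative $C^1$ weak supersolutions of $-\Delta_p w + b(x) w^{p-1} \geq 0$ with $b \in L^\infty_{\mathrm{loc}}$, $b \geq 0$, a setting in which the weak comparison principle of Tolksdorff/Damascelli for the $p$-Laplacian with monotone zero-order perturbation is available.

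For the Strong Maximum Principle, suppose for contradiction that $u \not\equiv 0$ and that $u$ vanishes at some interior point. By continuity and connectedness, select an open ball $B_R(y)$ with $\overline{B_R(y)} \subset \Omega$, $B_R(y) \subset \{u > 0\}$, and a point $x_1 \in \partial B_R(y)$ with $u(x_1) = 0$. On the annulus $A = B_R(y) \setminus \overline{B_{R/2}(y)}$ I would build a radial barrier of V\'azquez type, for instance
\begin{equation*}
v(r) = \eta \bigl( r^{-\alpha} - R^{-\alpha} \bigr), \qquad \alpha > \tfrac{N-p}{p-1},
\end{equation*}
so that a direct radial computation of $\Delta_p v$ together with an appropriate choice of $\alpha$, of a small $R$ and a small $\eta$ gives $-\Delta_p v + \|b\|_{L^\infty(A)}\, v^{p-1} \leq 0$ on $A$, and $v \leq u$ on $\partial B_{R/2}(y)$ (where $u$ is bounded below by a positive constant). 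Since $v = 0 \leq u$ on $\partial B_R(y)$, the weak comparison principle yields $u \geq v$ on $A$. As $u(x_1) = v(x_1) = 0$ and $u - v \geq 0$ on $A$, the inward radial derivative of $u-v$ at $x_1$ is nonnegative, whence $|Du(x_1)| \geq |Dv(x_1)| > 0$. But $u \geq 0$ on $\Omega$ attains an interior minimum at $x_1$, forcing $Du(x_1) = 0$: contradiction.

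Hopf's Lemma follows by an entirely analogous construction on a ball placed inside $\Omega$ by the interior sphere condition: pick $B_R(y) \subset \Omega$ with $x_0 \in \partial B_R(y) \cap \partial\Omega$, invoke the SMP just proved to get $u > 0$ on $\partial B_{R/2}(y)$, and choose $\eta$ small so that $v \leq u$ on $\partial A$. The comparison principle gives $u \geq v$ on $A$, and since $v(x_0) = u(x_0) = 0$ with $\partial_\nu v(x_0) > 0$ strictly for the inward normal, the same strict positivity transfers to $u$.

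The main obstacle is the sign analysis of $\Delta_p v$ on the annulus: the factor $|Dv|^{p-2}$ in the divergence makes the operator degenerate for $p > 2$ and singular for $1 < p < 2$, and ensuring $\Delta_p v \geq \|b\|_{L^\infty(A)}\, v^{p-1}$ pointwise on $A$ demands a careful tuning of $\alpha$, $R$ and $\eta$ (in particular, scaling $R$ small). A secondary technical point is that the weak comparison principle for the $p$-Laplacian with absorption is not elementary: one must invoke the form of the comparison theorem (as in \cite{Va}, or the Damascelli comparison theorems) valid for $C^1$ weak sub/supersolutions on the annular domain $A$.
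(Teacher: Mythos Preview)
The paper does not give its own proof of this theorem: it is stated as a recalled result, with references to V\'azquez and to Pucci--Serrin(--Zou). Your proposal is precisely the classical V\'azquez barrier argument that the paper cites, so in substance you are reproducing the proof in the cited reference rather than offering an alternative.

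One small correction to your parameter tuning: once the touching ball $B_R(y)$ is fixed by the geometry (either by the boundary of $\{u>0\}$ in the SMP step, or by the interior sphere condition in the Hopf step), you are not free to shrink $R$. The radial computation gives, for $v(r)=\eta(r^{-\alpha}-R^{-\alpha})$,
\[
-\Delta_p v = -(\eta\alpha)^{p-1}\bigl[(\alpha+1)(p-1)-(N-1)\bigr]\,r^{-1-(\alpha+1)(p-1)},
\]
and the requirement $-\Delta_p v + \|b\|_{L^\infty(A)} v^{p-1}\le 0$ on $A$ reduces (using $v\le \eta r^{-\alpha}$) to
\[
\alpha^{p-1}\bigl[(\alpha+1)(p-1)-(N-1)\bigr]\ \ge\ \|b\|_{L^\infty(A)}\,R^{\,p},
\]
which is achieved by taking $\alpha$ large (with $R$ fixed), after which $\eta>0$ is chosen small to match the inner boundary condition. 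With that adjustment your sketch is correct.
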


Concerning weak and  strong comparison principles for the $p$-laplacian, see e.g. \cite{CuTa}, \cite{Da}, \cite{DaSc1}, \cite{DaSc2}, \cite{GuVe} \cite{PuSe}.
\par
Here we will only need the following elementary case of (weak) comparison principle.
\begin{theorem}(Weak comparison principle) \label{WeakCP}
Let $\Omega $ be a bounded domain in $\R^N$, $1<p< \infty$, and suppose that $u, v \in W^{1,p}(\Omega )$ weakly satisfy
\be  \begin{cases}
- \Delta _p u  \leq - \Delta _p v, &\text{ \ \ in \ } \Omega \\
\qquad u  \leq   v, &\text{ \ \ on \ } \partial \Omega 
\end{cases}
\ee
i.e. $(u-v)^+ \in W_0^{1,p} (\Omega ) \; , \; $ and for any $\gf \in C_c^1 (\Omega )$, $\gf \geq 0$, (and by density for any nonnegative $\gf \in W_0^{1,p} (\Omega )$)
$$ 
\int _{\Omega }  |Du|^{p-2}Du \cdot D \gf \leq   \int _{\Omega }  |Dv|^{p-2}Dv \cdot D \gf 
$$
Then, $u \leq v $ in $\Omega $.  
\end{theorem}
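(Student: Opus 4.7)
The plan is to use $\varphi = (u-v)^+$ as a test function in the difference of the two weak inequalities, and then exploit the strict monotonicity of the vector field $a \mapsto |a|^{p-2}a$ on $\R^N$. By hypothesis $(u-v)^+ \in W_0^{1,p}(\Omega)$, so it is an admissible test function by density. Plugging it in and subtracting yields
\[
\int_\Omega \bigl(|Du|^{p-2}Du - |Dv|^{p-2}Dv\bigr) \cdot D(u-v)^+ \, dx \leq 0.
\]

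Next I would observe that $D(u-v)^+$ vanishes a.e.\ on $\{u \leq v\}$ and equals $Du - Dv$ a.e.\ on the set $E := \{u > v\}$. Therefore the above integral reduces to
\[
\int_{E} \bigl(|Du|^{p-2}Du - |Dv|^{p-2}Dv\bigr) \cdot (Du - Dv) \, dx \leq 0.
\]
The key analytic input is the elementary pointwise inequality for the $p$-Laplacian: for all $a, b \in \R^N$,
\[
\bigl(|a|^{p-2}a - |b|^{p-2}b\bigr) \cdot (a-b) \geq 0,
\]
with equality if and only if $a = b$. (One proves this by distinguishing the cases $p \geq 2$, where one has the stronger bound by $c_p |a-b|^p$, and $1 < p < 2$, where one has a bound by $c_p |a-b|^2 (|a|+|b|)^{p-2}$; either way the quantity is strictly positive unless $a=b$.)

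Combining the two, the nonnegative integrand must vanish a.e.\ on $E$, forcing $Du = Dv$ a.e.\ on $E$, and hence $D(u-v)^+ = 0$ a.e.\ in $\Omega$. Since $(u-v)^+ \in W_0^{1,p}(\Omega)$, the Poincar\'e inequality (or the characterization of $W_0^{1,p}$) gives $(u-v)^+ \equiv 0$, i.e.\ $u \leq v$ in $\Omega$. The only mildly delicate point is justifying the strict monotonicity of $a \mapsto |a|^{p-2}a$ uniformly in the two regimes $p \geq 2$ and $1 < p < 2$; everything else is a standard manipulation of the weak formulation.
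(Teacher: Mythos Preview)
Your proof is correct and follows exactly the approach the paper indicates: the paper simply remarks that the proof is elementary, taking $\varphi = (u-v)^+$ as a test function, and you have spelled out precisely those details (monotonicity of $a \mapsto |a|^{p-2}a$ and Poincar\'e).
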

The proof is elementary, taking $\gf = (u-v)^+ $ as a test function.

\par
\bigskip

Next, we recall some results on  the monotonicity of solutions of the $p$-Laplace equations.
We consider the following problem
\begin{equation}\label{EQ:1}
\begin{cases}
-\Delta_p (u) = f(u), &\qquad \text{in}\, \Omega \\
\qquad\ \ \, u >\,0, &\qquad  \text{in}\, \Omega \\
\qquad\ \  \, u = \, 0,&\qquad \text{on}\, \partial\Omega,
\end{cases}
\end{equation}
where $\Omega$ is a bounded smooth domain in $\mathbb{R}^N$, $N\geqslant 2$,
$1<p<\infty$, and we
have the following hypotheses on $f$:
\begin{itemize}
\item[(*)] $f:[0,\infty)\rightarrow \mathbb{R}$ is a continuous function which is
locally Lipschitz continuous in $(0, \infty )$.
\end{itemize}
\par
\medskip
The results that we are going to recall, can be briefly rephrased saying that  all the conclusions of  Gidas, Ni, and Nirenberg's Theorem (see \cite{GiNiNi}, \cite{BeNi}) hold for the $p$-Laplacian, provided $f$ is  only locally Lipschitz continuous if $1<p<2$, and moreover $f(s)>0 $ if $s>0 $ for $p>2$
(see \cite{Da},  \cite{DaPa}, \cite{DaPa2}, \cite{DaSc1}, \cite{DaSc2}).
\par
\bigskip

To state more precisely  some known result about the monotonicity and symmetry of solutions of $p$-Laplace equations, we need some notations.\\
Let  $\nu$ be a direction in $\mathbb{R}^N$. For a real number $\lambda$ we define
\begin{equation}\label{lu:1.2}
T^{\nu}_{\lambda}=\{x \in \mathbb{R} : x\cdot \nu = \lambda\}
\end{equation}
\begin{equation}\label{lu:1.3}
\Omega^{\nu}_{\lambda}= \{x \in \Omega :x \cdot \nu < \lambda\}
\end{equation}
\begin{equation}\label{lu:1.4}
x^{\nu}_{\lambda}=R^{\nu}_{\lambda}(x)=x+2(\lambda - x \cdot \nu)\nu, \quad\quad x \in \mathbb{R}^N
\end{equation}
and
\begin{equation}\label{lu:1.5}
a(\nu)=\inf_{x\in \Omega}x\cdot\nu .
\end{equation}
If $\lambda> a(\nu)$ then $\Omega^{\nu}_{\lambda}$ is nonempty, thus we set
\begin{equation}\label{lu:1.6}
(\Omega^{\nu}_{\lambda})'= R^{\nu}_{\lambda}(\Omega^{\nu}_{\lambda}) .
\end{equation}
Following \cite{GiNiNi, Se} we observe that for $\lambda - a(\nu)$ small
then $(\Omega^{\nu}_{\lambda})'$ is contained in $\Omega$ and will remain in it, at least until
one of the following occurs:
\begin{itemize}
\item[(i)]$(\Omega^{\nu}_{\lambda})'$ becomes internally tangent to $\partial \Omega$ .
\item[(ii)]$\ T^{\nu}_{\lambda}$ is orthogonal to $\partial \Omega$ .
\end{itemize}
Let $\Lambda _1 (\nu)$ be the set of those  $\lambda >a(\nu)$ such that for each $\mu < \lambda$
none of the conditions (i) and (ii) holds and define
\begin{equation}\label{lu:1.7}
\lambda_{1}(\nu)= \sup  \, \Lambda _1 (\nu).
\end{equation}
Moreover let
\begin{equation}
\Lambda_2(\nu)=\{\lambda > a(\nu): (\Omega^{\nu}_\mu)'\subseteq  \Omega \quad
\forall\mu\in(a(\nu),\lambda]\},
\end{equation}
and
\begin{equation}
\lambda_2 (\nu)=\sup\,\Lambda_2(\nu).
\end{equation}
Since $\Omega $ is supposed to be smooth, note that neither $\Lambda_1(\nu) $ nor
$ \Lambda_2(\nu) $ are empty, and $\Lambda_1(\nu) \subseteq
\Lambda_2(\nu)$, so that $\lambda_1 (\nu) \leqslant \lambda_2 (\nu) $
(in the terminology of \cite{GiNiNi},
$\Omega ^{\nu}_{\lambda_1 (\nu)}$ and $\Omega ^{\nu}_{\lambda_2
(\nu)}$ correspond to the 'maximal cap', and the 'optimal cap' respectively).
Finally define
\begin{equation}
\Lambda_0(\nu)=\{\lambda > a(\nu): u\leqslant u^{\nu}_{\lambda}\quad
\forall\mu\in(a(\nu),\lambda]\},
\end{equation}
and
\begin{equation}
\lambda_0 (\nu)=\sup\,\Lambda_0(\nu).
\end{equation}

\begin{theorem}\label{TEO:SIMI}
Let $\Omega$ be a bounded smooth domain in $\mathbb{R}^N$, $N\geqslant
2$, $1<p<\infty$,
$f:[0,\infty)\rightarrow \mathbb{R}$  a continuous function which is
locally Lipschitz continuous  in $(0, \infty )$ and strictly positive in $(0, \infty )$ if $p>2$. Let
$u\in C^1(\overline{\Omega})$  be a weak solution of \eqref{EQ:1}.\\
For any direction
$\nu$ and for $\lambda$ in the interval  $(a(\nu),\lambda_1 (\nu)]$ we have
\begin{equation}\label{lu:1.8I}
u(x)\leqslant u(x^{\nu}_{\lambda})\quad \forall x \in \Omega ^{\nu}_{\lambda}.
\end{equation}

If $f$ is locally Lipschitz continuous in the closed interval $[0, \infty )$
then \eqref{lu:1.8I} 
holds for any $\lambda$ in the
interval  $(a(\nu),\lambda_2 (\nu))$ 
\end{theorem}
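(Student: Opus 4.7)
The plan is to apply the Alexandrov--Serrin moving planes method adapted to the degenerate/singular $p$-Laplacian, following the strategy of Damascelli and coauthors. Fix a direction $\nu$; by the very definition of $\lambda_0(\nu)$ given in the preliminaries, it suffices to show $\lambda_0(\nu) \geq \lambda_1(\nu)$, so that \eqref{lu:1.8I} automatically holds for every $\lambda \in (a(\nu),\lambda_1(\nu)]$.

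First I would prove that $\Lambda_0(\nu)$ is non-empty and contains a right neighborhood of $a(\nu)$. For $\lambda$ slightly larger than $a(\nu)$ the reflected cap $(\Omega^\nu_\lambda)'$ lies inside $\Omega$, and $v := u^\nu_\lambda$ also solves $-\Delta_p v = f(v)$ on $\Omega^\nu_\lambda$, with boundary data satisfying $u \leq v$: on $\partial\Omega \cap \partial\Omega^\nu_\lambda$ one has $u = 0 \leq v$, while on $T^\nu_\lambda$ equality holds. Since the cap is narrow, one concludes $u \leq v$ in $\Omega^\nu_\lambda$ either by a narrow-domain weak comparison principle tailored to the $p$-Laplacian (as in \cite{Da,DaSc1}), or more directly, by invoking Theorem \ref{hop} to get $\partial u/\partial \nu > 0$ in a uniform strip near $\partial\Omega$ and then comparing values across $T^\nu_\lambda$.

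Next, by continuity of $u$, the condition defining $\Lambda_0(\nu)$ is preserved under passage to the limit, so $\lambda_0 := \lambda_0(\nu)$ itself belongs to $\Lambda_0(\nu)$. The core step is to rule out $\lambda_0 < \lambda_1(\nu)$. Setting $w := u^\nu_{\lambda_0}$, one needs $u \leq w$ to hold with strict inequality off a negligible set, so that the plane can be pushed slightly further. The main obstacle is precisely here: the usual strong comparison principle fails for $-\Delta_p$ when $p \neq 2$, because of the degeneracy on the critical set $Z^\nu_{\lambda_0} = \{Du = Dw = 0\}$. I would handle it by the weighted-Sobolev/refined comparison machinery of \cite{Da,DaPa,DaSc1,DaSc2}: on each connected component of $\Omega^\nu_{\lambda_0} \setminus Z^\nu_{\lambda_0}$, either $u \equiv w$ or $u < w$; combining this with Hopf's lemma (Theorem \ref{hop}) to rule out $u \equiv w$ on components touching $\partial\Omega$, one obtains strict inequality on $\Omega^\nu_{\lambda_0}$ outside a thin set.

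Finally, a standard compactness argument closes the loop: pick a compact $K \subset \Omega^\nu_{\lambda_0}$ on which $w - u \geq \delta > 0$ and whose complement in $\Omega^\nu_{\lambda_0+\varepsilon}$ has arbitrarily small Lebesgue measure. Applying the weak comparison principle (Theorem \ref{WeakCP}) on the narrow set $\Omega^\nu_{\lambda_0+\varepsilon} \setminus K$ yields $u \leq u^\nu_{\lambda_0+\varepsilon}$ there, and then on all of $\Omega^\nu_{\lambda_0+\varepsilon}$, contradicting the maximality of $\lambda_0$. For the second assertion, when $f$ is locally Lipschitz on the whole $[0,\infty)$ the Lipschitz bound on $f$ used in the weak comparison step is available even where $u$ vanishes on $\partial\Omega \cap \partial(\Omega^\nu_\lambda)'$, so the same argument extends the monotonicity up to $\lambda_2(\nu)$.
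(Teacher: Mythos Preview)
The paper does not actually prove this theorem: it is \emph{recalled} from the literature, with explicit citations to \cite{Da,DaPa,DaPa2,DaSc1,DaSc2} in the paragraph preceding the statement. Your sketch correctly outlines the moving-plane strategy developed in those references (start the plane, reach $\lambda_0$ by continuity, and use refined comparison away from the critical set $Z^\nu_{\lambda_0}$ together with a small-measure comparison argument to push the plane further), so in spirit you are reproducing exactly what the paper defers to.

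One technical remark: in the final step you invoke Theorem~\ref{WeakCP} on the narrow set $\Omega^\nu_{\lambda_0+\varepsilon}\setminus K$, but that theorem as stated in the paper compares sub- and supersolutions of the \emph{same} equation $-\Delta_p u \leq -\Delta_p v$, with no zero-order term. Here you are comparing $u$ and $u^\nu_\lambda$, each solving $-\Delta_p w = f(w)$ with its own right-hand side, so what is actually needed is the weak comparison principle \emph{for the semilinear problem} on domains of small measure (using the Lipschitz bound on $f$), precisely the tool from \cite{Da,DaSc1} you correctly cited in the starting step. This is only a mislabeling, not a gap in the argument.
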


\begin{corollary} \label{COR:SIMI}
If $f$ is locally Lipschitz continuous in the closed
interval $[0, \infty )$ and strictly positive in $(0, \infty )$,
and the domain $ \Omega  $ is convex with respect to a direction $\nu  $ and  symmetric with
respect to the hyperplane $T_{0}^{\nu }= \left \{ x \in \mathbb
{R}^{N} : x
\cdot \nu  =0  \right \} $,  then $u$ is
symmetric, i.\ e.\  $u(x)=u(x_0^{\nu })  $, and nondecreasing in
the $\nu  $--direction in  $ \Omega  _0 ^{\nu } $. 
In particular if $\Omega  $ is a ball then $u$ is radially symmetric and radially decreasing.
\end{corollary}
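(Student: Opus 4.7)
The plan is to apply Theorem \ref{TEO:SIMI} twice, once to the direction $\nu$ and once to the opposite direction $-\nu$, exploiting the \emph{second} (stronger) conclusion of that theorem, which is available here because $f$ is assumed to be locally Lipschitz continuous on the closed half--line $[0,\infty)$.

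First I would check that $\lambda_2(\nu) \geq 0$. Pick $\lambda \in (a(\nu), 0]$ and $x \in \Omega_\lambda^\nu$. By symmetry of $\Omega$ with respect to $T_0^\nu$, the reflection $x_0^\nu$ belongs to $\Omega$; the point $x_\lambda^\nu$ lies on the segment joining $x$ to $x_0^\nu$, a segment parallel to $\nu$ whose endpoints are in $\Omega$, and so by convexity of $\Omega$ in the direction $\nu$ the whole segment, and in particular $x_\lambda^\nu$, lies in $\Omega$. Hence $(\Omega_\lambda^\nu)' \subseteq \Omega$ for every $\lambda \in (a(\nu), 0]$, and Theorem \ref{TEO:SIMI} (second statement) gives
\[
u(x) \leq u(x_\lambda^\nu) \qquad \text{for every } x \in \Omega_\lambda^\nu \text{ and every } \lambda \in (a(\nu), 0).
\]
Letting $\lambda \to 0^-$ and using the continuity of $u$, one obtains $u(x) \leq u(x_0^\nu)$ for all $x \in \Omega_0^\nu$.

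The same argument applied to the opposite direction $-\nu$ (for which the symmetry and convexity hypotheses still hold, since $T_0^{-\nu} = T_0^\nu$) gives $u(y) \leq u(y_0^{-\nu})$ for every $y \in \Omega_0^{-\nu}$. Taking $y := x_0^\nu$ with $x \in \Omega_0^\nu$ and noting that $y_0^{-\nu} = x$, the reverse inequality $u(x_0^\nu) \leq u(x)$ follows. Combining the two inequalities yields $u(x) = u(x_0^\nu)$ throughout $\Omega$, which is the desired symmetry. For the monotonicity, given $x, x' \in \Omega_0^\nu$ with $x' = x + t\nu$, $t>0$, I would set $\lambda := \tfrac12 (x + x')\cdot \nu \in (a(\nu), 0)$: then $x \in \Omega_\lambda^\nu$ and $x_\lambda^\nu = x'$, so the displayed inequality gives $u(x) \leq u(x')$.

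Finally, when $\Omega$ is a ball centered at the origin the previous conclusions hold for \emph{every} unit direction $\nu$; invariance under reflection across every hyperplane through the origin forces $u$ to depend only on $|x|$, and the monotonicity translates into $u$ being nonincreasing in $|x|$. I do not anticipate a substantial obstacle: the only subtle point is the passage to the limit $\lambda \to 0^-$, which is harmless by the continuity of $u$, and the whole argument is the standard Gidas--Ni--Nirenberg deduction of symmetry from the moving planes comparison, adapted in a routine way to the $p$--Laplacian version encoded in Theorem \ref{TEO:SIMI}.
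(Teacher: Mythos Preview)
The paper does not supply a proof of this corollary; it is stated as an immediate consequence of Theorem~\ref{TEO:SIMI} and of the classical Gidas--Ni--Nirenberg argument. Your proposal spells out exactly that standard deduction, and it is correct: you verify $\lambda_2(\nu)\geq 0$ from convexity and symmetry, apply the second conclusion of Theorem~\ref{TEO:SIMI} for $\lambda\in(a(\nu),0)$, pass to the limit $\lambda\to 0^-$ by continuity of $u$, and then run the same argument in the direction $-\nu$ to obtain the reverse inequality and hence the symmetry; the monotonicity and the radial case are handled correctly as well.
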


\begin{remark} \label{nonnegative}

In this paper we assume  $f$  positive for $p>2$  only because we are going to exploit the monotonicity results stated in the previous theorem, obtained in \cite{DaPa, DaPa2, DaSc1, DaSc2}, and in these papers  the positivity of $f$ is assumed when $p>2$.\par
In any case, in all the results that  we prove we always assume that
$f$ satisfies  $H_1)$.
\end{remark}

\par
\bigskip

As a consequence of the previous theorem,  solutions are monotone increasing from the points on the boundary along directions that belong to a neighborhood of directions close to the inner boundary.  \par
As a further consequence  we have the following property, as observed in \cite{DeFLN} for $p=2$, which can be deduced by contradiction using the strict convexity of the domain and the monotonicity of the solutions provided by the previous cited papers (see \cite {AzCl} for a related geometric discussion).\par
\medskip
\begin{lemma} \label{LemmaMovingPlane}  Let $\Omega $ be a strictly convex bounded smooth domain, and define \ \ 
 $\Omega  _{\delta}= \{ x \in \Omega  : \text{ dist }(x , \partial \Omega  ) > \delta \}$ for $\delta >0 $. \par
Then
the following holds for a  weak solution 
$u \in C^1 (\overline {\Omega })$    of the problem \eqref{MainProblem}, where $f$ satisfies the condition 
$H_1)$
\par
\medskip

\begin{equation}\label{MovingPlanes}
\begin{cases}
& \exists \; \gamma, \epsilon >0  \text{,  depending only on } \Omega  \text{, such that } \\
&\forall \; x \in \Omega  \setminus \Omega  _{\epsilon}  \text{ there is a part of a cone } I_x \text{ with }   \\
&i) \quad u(\xi) \geq u(x) \; \quad\forall \; \xi  \in I_x,  \\
&ii) \quad I_x \subset \Omega  _{ \frac {\epsilon } 2} 
,   \\
& iii)  \quad \text{meas } (I_x) \geq \gamma .
\end{cases}
\end{equation}
\end{lemma}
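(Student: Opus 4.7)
The plan is to exploit the moving-planes monotonicity of Theorem \ref{TEO:SIMI}: for every direction $\nu$ and every $\lambda \in (a(\nu), \lambda_1(\nu)]$, each $y \in \Omega^\nu_\lambda$ satisfies $u(y) \leq u(y^\nu_\lambda)$. Given $x$ near $\partial\Omega$, I will produce an open family of pairs $(\nu, \lambda)$ with $x \in \Omega^\nu_\lambda$; the reflections $x^\nu_\lambda$ all have $u$-value at least $u(x)$, and strict convexity will force them to fill a truncated cone of uniformly positive measure lying well inside $\Omega$.

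First I extract uniform geometric constants depending only on $\Omega$. Because $\partial\Omega$ is smooth and strictly convex, a compactness argument produces $\alpha_0, \rho_0 > 0$ such that for every $y \in \partial\Omega$ with inward unit normal $\nu_0(y)$, and every direction $\nu$ in the spherical cap
\[
\mathcal{N}(y) := \bigl\{\nu \in \mathbb{R}^N : |\nu|=1,\; \nu \cdot \nu_0(y) \geq 1 - \alpha_0\bigr\},
\]
one has $\lambda_1(\nu) - a(\nu) \geq \rho_0$ and $y \cdot \nu \leq a(\nu) + \rho_0/16$. The point is that strict convexity prevents the tangency/orthogonality obstructions (i)--(ii) preceding \eqref{lu:1.7} from triggering before the moving plane has swept a definite distance $\rho_0$; uniformity in $y$ follows by compactness of $\partial\Omega$ and continuous dependence in $\nu$ of these obstructions.

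Now set $\epsilon := \rho_0/16$ and fix $x \in \Omega \setminus \Omega_\epsilon$. Let $y \in \partial\Omega$ realize $\mathrm{dist}(x, \partial\Omega)$; then $x \cdot \nu \leq a(\nu) + \rho_0/8$ for every $\nu \in \mathcal{N}(y)$. For each such $\nu$ and each $\lambda \in [x \cdot \nu + \rho_0/16,\, x \cdot \nu + \rho_0/8]$ we have $x \in \Omega^\nu_\lambda$ and $\lambda < \lambda_1(\nu)$, so Theorem \ref{TEO:SIMI} yields $u(x^\nu_\lambda) \geq u(x)$. Writing $t = 2(\lambda - x \cdot \nu)$, the reflection formula \eqref{lu:1.4} gives $x^\nu_\lambda = x + t\nu$, and as $(\nu, \lambda)$ varies these reflections trace out the truncated cone
\[
I_x := \bigl\{ x + t\nu : \nu \in \mathcal{N}(y),\; t \in [\rho_0/8,\, \rho_0/4] \bigr\}.
\]
Property (i) is immediate. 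For (ii), each $x + t\nu \in I_x$ lies essentially on the inward normal from $y$ (to within angle $\simeq \sqrt{2\alpha_0}$), so using a local graph representation of $\partial\Omega$ over its tangent plane at $y$, one checks that for $\alpha_0$ small enough $\mathrm{dist}(x + t\nu,\partial\Omega) \geq t(1-\alpha_0) - C\alpha_0 t^2 \geq \rho_0/16 = \epsilon \geq \epsilon/2$. Property (iii) is a polar-coordinate estimate:
\[
\mathrm{meas}(I_x) \geq |\mathcal{N}(y)| \int_{\rho_0/8}^{\rho_0/4} t^{N-1}\,dt \;=:\; \gamma,
\]
a constant depending only on $\Omega$ since $|\mathcal{N}(y)|$ is independent of $y$.

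The main obstacle is the uniform cap-depth bound $\lambda_1(\nu) - a(\nu) \geq \rho_0$ for $\nu$ in a fixed-size spherical cap around every inward normal $\nu_0(y)$. This is the quantitative content of strict convexity combined with $C^2$-smoothness of $\partial\Omega$, and the rest of the argument is a routine combination of the moving-planes conclusion with polar coordinates and the reflection formula \eqref{lu:1.4}. A secondary care point is ensuring that the reflected cone lies in $\Omega$ at all, but this is automatic from $\lambda \leq \lambda_1(\nu) \leq \lambda_2(\nu)$ and the definition \eqref{lu:1.6} of the admissible reflected cap.
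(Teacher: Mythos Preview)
Your proof is correct and follows precisely the mechanism the paper indicates (the paper does not give a detailed argument, but refers to \cite{DeFLN} and \cite{AzCl} and says the lemma follows from the moving-planes monotonicity of Theorem \ref{TEO:SIMI} combined with strict convexity, which is exactly what you implement). The only cosmetic difference is that the paper describes $I_x$ as $K_x \cap \Omega_{\epsilon/2}$ for a cone $K_x$ with vertex $x$ congruent to a fixed cone $K$, so that (ii) holds by definition and the geometric estimate goes into (iii); you instead construct the full truncated cone and verify (ii) directly---this is the same estimate in a different place, and your informal bound for $\mathrm{dist}(x+t\nu,\partial\Omega)$ is made rigorous by first fixing $\rho_0$ as the minimum cap depth $\inf_\nu(\lambda_1(\nu)-a(\nu))>0$ and then choosing $\alpha_0$ small enough (depending only on the curvature of $\partial\Omega$).
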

Geometrically $I_x$ is a part of a cone $K_x$ with vertex in $x$, where all the $K_x$ are congruent to a fixed cone $K$, and 
if $x \in \Omega \setminus \Omega  _{ \frac {\epsilon } 2} $ then 
$I_x=K_x\cap  \Omega  _{ \frac {\epsilon } 2} $. \par
Let us emphasize that $\epsilon $ and $\gamma $ depend only on the geometry of the strictly convex, bounded, smooth domain  $\Omega $.
\par
\medskip

We will use this conditions to get  $L^{\infty }$ a priori bounds  in a neighborhood of the boundary, for the solutions on a strictly convex, bounded, smooth domain $\Omega $.\par

\par
\bigskip
\bigskip

\subsection{First eigenvalue and eigenfunction.} \ 
\par
Let $\Omega $ be a bounded domain and $1<p< \infty$. A  real number $\lambda $ is a (nonlinear) \emph{eigenvalue} of the $p$-laplacian,  with associated eigenfunction $u$ if $u \in W_0^{1,p}(\Omega )$, $u \not \equiv 0 $, solves the equation 
$
- \Delta _p u = \lambda \, |u|^{p-2}u \text{ in } \Omega 
$.
\par
Although the general theory of eigenvalues for the $p$-Laplacian is far from complete (see \cite{GaPer} and the survey \cite{Per}), the properties of the first eigenvalue are known and are the same as in the case $p=2$.
Namely the following result holds (see \cite{An}, \cite{Linq}, \cite{Per}).
\begin{theorem} \label{1eigenvalue}
Let us define 
\be \lambda _1 = \lambda _1 (p, \Omega )
= \inf \left\{ \int _{\Omega } |Dv|^p \, dx \, :  v \in W_0^{1,p}(\Omega) \, , \, \int _{\Omega} |v|^p \, dx =1   \right\}
\ee 
\par
Then, $\lambda _1 $ is the first eigenvalue (i.e. $\lambda _1 \leq \lambda $ for any eigenvalue $\lambda $), it is simple, i.e. there is only an eigenfunction  up to multiplication by a constant, and it is isolated. \par
Moreover a first eigenfunction does not change sign in $\Omega $ and by the strong maximum principle it is in fact either strictly positive or strictly negative in $\Omega $. \par
So we can select a unique eigenfunction 
$ \phi _1 $ such that  \par
$ \int _{\Omega} |\phi _1|^p \, dx =1 $ and $\phi _1 >0 $ in $\Omega $.  
\end{theorem}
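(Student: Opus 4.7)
My plan is to carry out the four assertions --- existence of a minimizer that attains $\lambda_1$, the characterization as the smallest eigenvalue, the sign constancy, the simplicity, and the isolation --- in that order, closely following the classical treatment of \cite{An, Linq, Per}.

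First I would obtain a minimizer by the direct method. Take a minimizing sequence $v_n \in W_0^{1,p}(\Omega)$ with $\int_\Omega |v_n|^p\,dx = 1$ and $\int_\Omega |Dv_n|^p\,dx \to \lambda_1$. The sequence is bounded in $W_0^{1,p}(\Omega)$, so up to a subsequence $v_n \rightharpoonup \phi_1$ weakly in $W_0^{1,p}(\Omega)$ and, by the Rellich--Kondrachov compact embedding, strongly in $L^p(\Omega)$. Hence $\int |\phi_1|^p = 1$, while the weak lower semicontinuity of $v \mapsto \int |Dv|^p$ forces $\int |D\phi_1|^p \le \lambda_1$; by definition of the infimum, equality holds and $\phi_1$ is a minimizer.

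Next, I would derive the Euler--Lagrange equation. Since replacing $v$ by $|v|$ preserves both integrals (the weak gradient of $|v|$ satisfies $|D|v|| = |Dv|$ a.e.), I may assume $\phi_1 \ge 0$. For any test function $\gf \in C_c^\infty(\Omega)$, differentiating the Rayleigh quotient $J(\phi_1 + t\gf)/\lVert \phi_1 + t\gf\rVert_p^p$ at $t = 0$ (or applying the Lagrange multiplier theorem) yields the weak identity
\begin{equation*}
\int_\Omega |D\phi_1|^{p-2} D\phi_1 \cdot D\gf \,dx = \lambda_1 \int_\Omega \phi_1^{p-1} \gf \,dx,
\end{equation*}
so $\phi_1$ weakly solves $-\Delta_p \phi_1 = \lambda_1 \phi_1^{p-1}$ in $\Omega$. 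By the regularity results in \cite{DiB, Lieb}, $\phi_1 \in C^{1,\tau}(\overline\Omega)$; the Strong Maximum Principle and Hopf's Lemma (Theorem \ref{hop}, applied with $g = \lambda_1 \phi_1^{p-1} \ge 0$) then give $\phi_1 > 0$ in $\Omega$. That $\lambda_1$ is the smallest eigenvalue is immediate: if $-\Delta_p u = \lambda|u|^{p-2}u$ with $u \not\equiv 0$, testing with $u$ yields $\lambda = \int|Du|^p/\int|u|^p \ge \lambda_1$, and the same test shows any eigenfunction associated to $\lambda_1$ is itself a minimizer, hence (again by the Strong Maximum Principle applied to $|u|$, which is also a minimizer and thus an eigenfunction) cannot change sign.

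The main obstacle is simplicity, since the functional $v \mapsto \int |Dv|^p$ is not strictly convex on $W_0^{1,p}$ (the issue is only modulo scalar multiples). I would exploit the hidden convexity: given two positive eigenfunctions $u, v$ normalized by $\int u^p = \int v^p = 1$, define
\begin{equation*}
w_t = \bigl( t\, u^p + (1-t)\, v^p \bigr)^{1/p}, \qquad t \in [0,1],
\end{equation*}
and check that $\int|Dw_t|^p \le t \int|Du|^p + (1-t)\int|Dv|^p = \lambda_1$, with equality only when $u$ and $v$ are proportional. Equivalently one uses the Picone identity (which is already invoked elsewhere in the paper), writing
\begin{equation*}
0 \le \int_\Omega \Bigl( |Du|^p - |Dv|^{p-2} Dv \cdot D\bigl( u^p/v^{p-1}\bigr) \Bigr)\,dx
\end{equation*}
(which requires a careful approximation of $u^p/v^{p-1}$ by an admissible test function, using the Hopf boundary estimate to control the quotient near $\partial\Omega$), and then using $-\Delta_p v = \lambda_1 v^{p-1}$ to get $0 \le \lambda_1 - \lambda_1 = 0$; equality in Picone forces $u/v \equiv$ const. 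This gives the uniqueness statement $\int |\phi_1|^p = 1$, $\phi_1 > 0$. Finally, for isolation, if $\lambda_n \searrow \lambda_1$ is a sequence of eigenvalues strictly larger than $\lambda_1$, the corresponding normalized eigenfunctions $u_n$ are bounded in $W_0^{1,p}(\Omega)$ and, up to a subsequence, converge strongly in $L^p$ to a first eigenfunction, hence to $\pm \phi_1$; in particular $u_n > 0$ on compact subsets of $\Omega$ for large $n$, contradicting the fact that non-first eigenfunctions must change sign (which itself follows from testing the eigenvalue equation on $u_n^\pm$ and using the definition of $\lambda_1$ on each nodal domain).
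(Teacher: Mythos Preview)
The paper does not give its own proof of this theorem: it is stated as a known result with the attribution ``(see \cite{An}, \cite{Linq}, \cite{Per})'' and no argument is supplied. Your outline is essentially the classical proof found in those references --- direct method for existence, Euler--Lagrange plus the strong maximum principle for positivity, Picone's identity (or the equivalent hidden convexity of $v \mapsto \int |Dv|^p$ along curves $t \mapsto (tu^p + (1-t)v^p)^{1/p}$) for simplicity, and a compactness/sign-change contradiction for isolation --- so there is nothing to compare beyond saying you have reproduced the standard argument the paper is invoking.

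One small imprecision worth flagging: in the isolation step you conclude from $u_n \to \pm\phi_1$ in $L^p$ that ``$u_n > 0$ on compact subsets of $\Omega$ for large $n$''. $L^p$ convergence alone does not give this; the usual route (as in \cite{An, Linq}) is either to upgrade to uniform convergence via the $C^{1,\tau}$ estimates of \cite{DiB, Lieb} and Arzel\`a--Ascoli, or to pass to an a.e.-convergent subsequence and apply Egorov's theorem to obtain a set of measure arbitrarily close to $|\Omega|$ on which $u_n > 0$ eventually. Either way one then contradicts the uniform lower bound $|\{u_n < 0\}| \ge c(\lambda_n) \ge c(\lambda_1 + 1) > 0$ that you correctly derive from testing the equation against $u_n^{\pm}$.
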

\par
\bigskip

\subsection{Picone's identity and inequality.} \
\par  
The following extension of the Picone's identity for the $p$-Laplacian has been proved by Allegretto and Huang.

\begin{theorem}[\cite{AlHu}]
Let $v_1,v_2 \geq 0 $ be  differentiable functions in an open set $\Omega  $, with $v_2 >0$ and $p>1$.
Put \\
$L(v_1, v_2)= |\nabla v_1 |^p +(p-1) \frac {v_1^p} {v_2^p} |\nabla v_2 |^p - 
p\frac {v_1^{p-1}} {v_2^{p-1}} \nabla v_1 \cdot  |\nabla v_2 |^{p-2} \nabla v_2  $, \\
$R(v_1, v_2)= |\nabla v_1 |^p - \nabla \left( \,  \frac {v_1^p} {v_2^{p-1}}\, \right) \cdot |\nabla v_2 |^{p-2} \nabla v_2 $ . \\
Then, $ R(v_1, v_2)=L(v_1, v_2)$ and $ L(v_1, v_2) \geq 0 $ .\par
As a consequence we have
\begin{equation}\label{PiconeInequality}
\nabla \left( \,  \frac {v_1^p} {v_2^{p-1}}\, \right) \cdot |\nabla v_2 |^{p-2} \nabla v_2  \leq |\nabla v_1 |^p
\end{equation}
\end{theorem}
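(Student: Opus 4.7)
The plan is to verify the identity $R(v_1,v_2)=L(v_1,v_2)$ by direct computation, and then to establish nonnegativity of $L$ via a Cauchy--Schwarz reduction to a scalar Young-type inequality.

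First I would simply differentiate the quotient in $R$. Using the product/chain rule on $v_1^p/v_2^{p-1}$ (which is well defined since $v_2>0$) gives
\begin{equation*}
\nabla\!\left(\frac{v_1^{p}}{v_2^{p-1}}\right) = \frac{p\,v_1^{p-1}}{v_2^{p-1}}\,\nabla v_1 \;-\; \frac{(p-1)\,v_1^{p}}{v_2^{p}}\,\nabla v_2.
\end{equation*}
Substituting this into the definition of $R(v_1,v_2)$ and distributing the dot product with $|\nabla v_2|^{p-2}\nabla v_2$ produces exactly the three terms that appear in $L(v_1,v_2)$, so $R=L$. This step is routine.

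The substantive step is showing $L(v_1,v_2)\ge 0$. I would first apply the Cauchy--Schwarz inequality to the mixed term:
\begin{equation*}
\nabla v_1 \cdot |\nabla v_2|^{p-2}\nabla v_2 \;\le\; |\nabla v_1|\,|\nabla v_2|^{p-1}.
\end{equation*}
Since $v_1,v_2\ge 0$ and the coefficient $p\,v_1^{p-1}/v_2^{p-1}$ multiplying this term is nonnegative, this yields the lower bound
\begin{equation*}
L(v_1,v_2) \;\ge\; |\nabla v_1|^{p} + (p-1)\frac{v_1^{p}}{v_2^{p}}|\nabla v_2|^{p} - p\,\frac{v_1^{p-1}}{v_2^{p-1}}\,|\nabla v_1|\,|\nabla v_2|^{p-1}.
\end{equation*}
Setting $a=|\nabla v_1|$ and $b=(v_1/v_2)|\nabla v_2|$, both nonnegative, reduces the claim to the one-variable inequality $a^{p} + (p-1)b^{p} \ge p\,a\,b^{p-1}$ for $a,b\ge 0$.

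The main (but mild) obstacle is the scalar inequality, which I would handle either by Young's inequality with conjugate exponents $p$ and $p/(p-1)$, namely $a\cdot b^{p-1}\le a^{p}/p + (p-1)b^{p}/p$, or equivalently by noting that $t\mapsto t^{p}$ is convex on $[0,\infty)$ so that the tangent line to $t^{p}$ at $t=b$ lies below the graph, giving $a^{p}\ge b^{p}+p\,b^{p-1}(a-b)$, which rearranges to exactly the desired inequality. This proves $L\ge 0$ pointwise, completes the proof of the identity, and the claimed inequality \eqref{PiconeInequality} follows immediately by rearranging $R(v_1,v_2)=L(v_1,v_2)\ge 0$.
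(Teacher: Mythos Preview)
Your proof is correct. Note, however, that the paper does not actually prove this theorem: it is stated as a quotation of the result of Allegretto and Huang \cite{AlHu}, with no argument given. Your direct computation of $\nabla(v_1^p/v_2^{p-1})$ to obtain $R=L$, followed by the Cauchy--Schwarz/Young reduction to the scalar inequality $a^p+(p-1)b^p\ge p\,ab^{p-1}$, is precisely the standard argument from the original reference, so there is nothing to contrast.
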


\par
\bigskip
\par
\bigskip

\subsection{Pohozaev's Identity for the $p$-Laplacian.} \ 
\par 
The following extension of the Pohozaev's identity for the $p$-Laplacian has been proved by Guedda and Veron.
\begin{theorem}[\cite{GuVe}]  \label{Pohozaev }
Let $u \in W_0^{1,p}(\Omega  ) \cap L^{\infty } ( \Omega ) $ be a weak solution of the problem  
\begin{equation}
\begin{cases}
-\Delta_p (u) = f(u) &\quad \text{in }\, \Omega  \\
\qquad\ \  u = \, 0 & \quad \text{on }\, \partial\Omega, 
\end{cases}
\end{equation}
where $\Omega $ is a bounded smooth domain in $\mathbb{R}^N$, $N\geqslant 2$, $p>1$ and
$f:[0,\infty)\rightarrow \mathbb{R}$ is a  continuous function. \par
Let
$F(t)= \int _0 ^t f(s) \, ds $, be a primitive of the function $f$.
Then, 
\begin{equation}\label{PohozaevIdentity}
N \int _{\Omega  } F(u) \, dx -  \frac {N-p}p \int _{\Omega  } f(u) \, u \, dx =
\frac {p-1}p
\int _ {\partial \Omega  } \left|\frac { \partial u}{\partial \nu}\right|^p (x \cdot \nu) \, ds  
\end{equation}
where $\nu $ is the unit exterior normal on $\partial \Omega  $.
\end{theorem}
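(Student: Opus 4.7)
My strategy is to run the classical Pohozaev multiplier argument with the vector field $x\cdot\nabla u$ at the formal level, and then to justify it in the weak $C^{1,\tau}$ setting by regularizing the degenerate/singular operator. Formally, I multiply $-\Delta_p u=f(u)$ by $x\cdot\nabla u$ and integrate over $\Omega$. For the right--hand side, I rewrite $f(u)(x\cdot\nabla u)=x\cdot\nabla F(u)$ and integrate by parts; since $F(0)=0$ and $u=0$ on $\partial\Omega$, the boundary contribution disappears and only $-N\int_\Omega F(u)\,dx$ remains.

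For the left--hand side, integration by parts gives
\[
-\int_\Omega \Delta_p u\,(x\cdot\nabla u)\,dx=\int_\Omega |\nabla u|^{p-2}\nabla u\cdot\nabla(x\cdot\nabla u)\,dx-\int_{\partial\Omega}|\nabla u|^{p-2}\frac{\partial u}{\partial\nu}(x\cdot\nabla u)\,ds.
\]
I will use the pointwise identity $\nabla u\cdot\nabla(x\cdot\nabla u)=|\nabla u|^2+\tfrac{1}{2}\, x\cdot\nabla|\nabla u|^2$ together with $\tfrac12|\nabla u|^{p-2}\nabla|\nabla u|^2=\tfrac1p\nabla|\nabla u|^p$, so that the volume integral equals $\int_\Omega |\nabla u|^p\,dx+\tfrac1p\int_\Omega x\cdot\nabla|\nabla u|^p\,dx$, which after a further integration by parts becomes $\tfrac{p-N}{p}\int_\Omega|\nabla u|^p\,dx+\tfrac1p\int_{\partial\Omega}|\nabla u|^p(x\cdot\nu)\,ds$. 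The condition $u=0$ on $\partial\Omega$ forces $\nabla u=(\partial u/\partial\nu)\nu$ along the boundary, so both surface integrals reduce to multiples of $\int_{\partial\Omega}|\partial u/\partial\nu|^p(x\cdot\nu)\,ds$ and combine into $-\tfrac{p-1}{p}\int_{\partial\Omega}|\partial u/\partial\nu|^p(x\cdot\nu)\,ds$. To eliminate $\int|\nabla u|^p$ I will test the weak formulation with $u\in W_0^{1,p}(\Omega)$, obtaining $\int_\Omega|\nabla u|^p\,dx=\int_\Omega f(u)u\,dx$; substituting this into the equality of the two sides produces exactly \eqref{PohozaevIdentity}.

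The main difficulty is that a weak solution of a $p$-Laplace equation is only $C^{1,\tau}(\overline\Omega)$, so $\nabla(x\cdot\nabla u)$ is not classically defined and $x\cdot\nabla u$ cannot be directly plugged in as a test function in the weak formulation. I therefore proceed by the standard regularization of Guedda and Veron: replace the operator by the uniformly elliptic $-\operatorname{div}\bigl((\varepsilon+|\nabla u_\varepsilon|^2)^{(p-2)/2}\nabla u_\varepsilon\bigr)$ with the same homogeneous boundary datum and an approximated right--hand side. The regularized problem admits $C^2(\Omega)\cap C^1(\overline\Omega)$ solutions for which the computation above is fully rigorous. Uniform $C^{1,\tau}(\overline\Omega)$ bounds coming from DiBenedetto--Lieberman regularity then allow me to pass to the limit $\varepsilon\to 0$ in every volume and boundary integral (dominated convergence in the interior, and uniform convergence of $\partial u_\varepsilon/\partial\nu$ on $\partial\Omega$ for the surface term) and thereby recover the identity for the original weak solution.
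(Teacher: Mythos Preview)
The paper does not give its own proof of this theorem: it is quoted in Section~2 as a known preliminary from Guedda--Veron \cite{GuVe}, with no argument supplied. Your proposal is correct and is precisely the standard Guedda--Veron approach (multiply by the Pohozaev multiplier $x\cdot\nabla u$, handle the degeneracy by the $\varepsilon$-regularization $(\varepsilon+|\nabla u_\varepsilon|^2)^{(p-2)/2}$, and pass to the limit using the uniform $C^{1,\tau}$ estimates of \cite{DiB,Lieb}), so there is nothing to compare.
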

\par
\bigskip

\par
\bigskip

\subsection{Gradient Regularity} \ \par
We are going to use the following result about the summability of the gradient for solutions to equations involving the $p$-Laplace operator. 

\begin{theorem}[Gradient Regularity]\label{th:RegularityGradient}
Let $\Omega  $ be a smooth bounded domain in $\R^N$, $N \geq 2$, and let 
$u \in W_0^{1,p}(\Omega ) $, $1 < p < \infty $, be a solution of the problem 
\begin{equation}\label{Cianchi-Iwaniek}
\begin{cases}
-\Delta_p (u) = g &\qquad \text{in }\, \Omega  \\
\qquad\ \  u = \, 0 &\qquad \text{on }\, \partial\Omega, 
\end{cases}
\end{equation}
with $g \in L^q(\Omega )$. \par
We suppose that 
\be \begin{cases}
1<q< \infty  & \text{ if } \quad p \geq N, \\
(p^{*})' \leq q < \infty &  \text{ if } \quad  1< p < N.
\end{cases}
\ee
Here 
$p^*= \frac {Np}{N-p}$ is the critical exponent for  Sobolev embedding, and 
$(p^{*})' = \frac {p^*}{p^* -1}= \frac {Np}{Np-N+p}$, is its conjugate exponent.
\begin{itemize}
\item [i)  ] If $ q<N$, then  \ $\ \Vert \nabla u \Vert _{L^{\frac {Nq(p-1)} {N-q}  } (\Omega )} \leq 
C \Vert g  \Vert _ {L^q (\Omega ) }^{\frac 1{(p-1)}}$
\item [ii)  ] If $q\geq N$, then  \ $\ \Vert \nabla u \Vert _{L^{\sigma  } (\Omega )} \leq 
C \Vert g  \Vert _ {L^q (\Omega ) }^{\frac 1{(p-1)}}$  \ for any $\sigma < \infty $. 
\end{itemize}
Here $C$ is a constant that depends on $p,N,q$.
\end{theorem}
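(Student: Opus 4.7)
The plan is to establish the gradient estimate via a combination of classical variational existence with a nonlinear Calder\'on--Zygmund type bound of Cianchi--Maz'ya flavor. First I would check that the hypotheses on $q$ ensure that $g$ lies in the dual space $W^{-1,p'}(\Omega)$: when $1<p<N$ this is exactly the condition $q\ge (p^{*})'$, and when $p\ge N$ any $q>1$ suffices (since formally $(p^{*})'=1$). Minty--Browder monotone operator theory then produces a unique weak solution $u\in W_0^{1,p}(\Omega)$.

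The base energy estimate comes from testing the equation with $u$ itself: combining H\"older's inequality and the Sobolev embedding $W_0^{1,p}\hookrightarrow L^{p^{*}}$,
\begin{equation*}
\|\nabla u\|_p^p \,=\, \int_{\Omega} g\,u \, dx \,\le\, \|g\|_{(p^{*})'}\,\|u\|_{p^{*}} \,\le\, C\,\|g\|_{(p^{*})'}\,\|\nabla u\|_p,
\end{equation*}
so that $\|\nabla u\|_p \le C \|g\|_{(p^{*})'}^{1/(p-1)}$. This settles the endpoint $q=(p^{*})'$.

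To improve the integrability, I would rewrite the equation as $-\operatorname{div}V = g$ with $V=|\nabla u|^{p-2}\nabla u$, so that $|V|=|\nabla u|^{p-1}$. Statement (i) is then equivalent to the Calder\'on--Zygmund-type estimate $\|V\|_{q^{*}}\le C\|g\|_q$, where $q^{*}=Nq/(N-q)$. The starting point is the Boccardo--Gallou\"et truncation estimate: testing the equation against the truncation $T_k(u)=\max(-k,\min(u,k))$ one obtains
\begin{equation*}
\int_{\{|u|<k\}} |\nabla u|^p \, dx \,\le\, k\,\|g\|_{1},
\end{equation*}
which already yields Marcinkiewicz-space control of $|V|$. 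Iterating this through a Moser-type bootstrap, in which one tests against powers of $u$ along a geometric sequence of exponents and applies Sobolev's inequality at each step, upgrades the Marcinkiewicz bound to the sharp $L^{q^{*}}$ estimate on $V$. Equivalently, for $1<q<N$ one may appeal directly to the pointwise Wolff-potential bounds of Kilpel\"ainen--Mal\'y, which control $|\nabla u|^{p-1}$ pointwise by a nonlinear potential of $g$ whose $L^{q^{*}}$ norm is in turn controlled by $\|g\|_q$.

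Case (ii) follows from case (i) by a simple interpolation argument: given a target $\sigma<\infty$, choose $\tilde q<N$ close enough to $N$ that $N\tilde q(p-1)/(N-\tilde q)\ge\sigma$, and use the boundedness of $\Omega$ in the form $\|g\|_{\tilde q}\le |\Omega|^{1/\tilde q-1/q}\|g\|_q$ together with case (i) applied to $\tilde q$. The principal obstacle is the sharp nonlinear Calder\'on--Zygmund estimate itself, which is the heart of the Cianchi--Maz'ya theorem; rather than reproduce its delicate truncation and rearrangement machinery here, I would simply invoke the result as stated in the literature.
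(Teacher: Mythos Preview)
Your proposal has a genuine gap in the sketch, and also misses the much simpler reduction the paper actually uses.

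\medskip
\textbf{The gap.} The Moser-type bootstrap you describe does \emph{not} yield higher integrability of the gradient. Testing with powers of $u$ gives
\[
\int_{\Omega} |\nabla u|^p\, u^{t-1}\,dx \;\le\; C\int_{\Omega} |g|\, u^{t}\,dx,
\]
which, combined with Sobolev, bootstraps the $L^r$ norms of $u$, not of $\nabla u$; you never obtain $\nabla u\in L^\sigma$ for any $\sigma>p$ this way. Likewise, the Kilpel\"ainen--Mal\'y Wolff-potential bounds control $u$ pointwise, not $|\nabla u|^{p-1}$; the pointwise \emph{gradient} potential estimates are later work (Mingione, Duzaar--Mingione, Kuusi--Mingione). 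So neither of your two proposed mechanisms actually produces the sharp $\|V\|_{q^*}\le C\|g\|_q$ bound, and you end up simply citing Cianchi--Maz'ya without a real argument.

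\medskip
\textbf{The paper's route.} The paper reduces the non-divergence right-hand side $g$ to divergence form by an auxiliary \emph{linear} problem: solve $-\Delta z = g$ in $\Omega$, $z=0$ on $\partial\Omega$. Classical Calder\'on--Zygmund gives $z\in W^{2,q}$, hence $\mathbf{F}:=-\nabla z\in L^{q^*}$ with $\|\mathbf{F}\|_{q^*}\le C\|g\|_q$ (for $q<N$; for $q\ge N$ one gets $\mathbf{F}\in L^s$ for every $s$). Then $-\Delta_p u=\operatorname{div}\mathbf{F}$, and one invokes the Iwaniec divergence-form estimate (extended up to the boundary by Byun--Wang--Zhou): if $\mathbf{F}\in L^{p'}\cap L^s$ with $s\ge p'$, then $\|\nabla u\|_{(p-1)s}^{\,p-1}\le C\|\mathbf{F}\|_s$. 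The hypothesis $q\ge (p^*)'$ when $1<p<N$ is exactly what ensures $q^*\ge p'$, so the Iwaniec result applies with $s=q^*$ and gives $\nabla u\in L^{(p-1)q^*}=L^{Nq(p-1)/(N-q)}$. This is a two-line reduction once the black box is identified, and it pinpoints precisely where the condition on $q$ enters.
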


\begin{remark} The exponent $(p^*)'$  is called the duality exponent, and the condition $q \geq (p^{*})'$ if $1<p<N $ guarantees by Sobolev's embeddings that  $g \in L^q(\Omega )$ belongs to the dual space 
$W^{-1, p'}(\Omega )$. If this is not the case, then we enter into the field of problems with measure data, and  other notions of solutions have been proposed (see \cite{BoGa}, \cite{Min}).
\end{remark}
\par

The previous theorem follows from different results  proved in several papers (see \cite{BoGa}, \cite{BWZ}, \cite{CiMa}, \cite{DiB}, \cite{DibMan}, 
\cite{Iw}, \cite{Min}, the survey \cite{CiMa2},  and the references therein), where also much more general situations are considered. \par
In the form that we need it is a consequence of the following

\begin{theorem}\label{Iwaniek2}
Let $\Omega  $ be a bounded (smooth) domain in $\R^N$, $1<p< \infty$ and let $u \in W_0^{1,p}(\Omega ) $ be a solution of the Dirichlet problem connected to the equation
$- \Delta _p u = \text{ div } \mathbf{F} $ in $\Omega $, \\
where $\mathbf{F} \in L^{p'}(\Omega  ; \R^N )  $, 
with $p' = \frac {p}{p-1}$. \par
If 
$\ \mathbf{F} \in L^{p'}(\Omega  ; \R^N ) \cap L^{s}(\Omega  ; \R^N )$, with $s \geq p'$, then
$\nabla u \in L^{(p-1)s}(\Omega ; \R^N ) $, and
$$\big\| |\nabla u| \big\| _{L^{(p-1)s}}^{p-1}  = \big\| |\nabla u|^{p-1}  \big\| _{L^s}  \leq  
C(s,p,N) \Vert \mathbf{F}  \Vert _{L^s} $$
\end{theorem}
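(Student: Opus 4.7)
The plan is to deduce this bounded-domain statement from Iwaniec's whole-space Calder\'on--Zygmund estimate for $-\Delta_p$, by combining an interior localization argument with a boundary flattening-plus-reflection argument. Since the conclusion is self-improving in $s$ once the base case $s=p'$ is established, I would first dispose of that case and then bootstrap upward.

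For the base case $s=p'$, I would take $u \in W_0^{1,p}(\Omega)$ itself as a test function in the weak formulation of $-\Delta_p u = \mathrm{div}\,\mathbf{F}$, obtaining
$$\int_\Omega |\nabla u|^p\,dx \;=\; \int_\Omega \mathbf{F}\cdot \nabla u\,dx \;\leq\; \|\mathbf{F}\|_{L^{p'}(\Omega)}\,\|\nabla u\|_{L^p(\Omega)}.$$
Dividing by $\|\nabla u\|_{L^p(\Omega)}$ and rearranging gives $\|\nabla u\|_{L^p(\Omega)}^{p-1}\leq \|\mathbf{F}\|_{L^{p'}(\Omega)}$, which is precisely the claim at $s=p'$ since $(p-1)p' = p$.

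For exponents $s>p'$, I would take a finite partition of unity subordinate to a cover of $\overline\Omega$ by open balls $B\subset\subset\Omega$ (the \emph{interior pieces}) and by half-ball neighborhoods where $\partial\Omega$ has been straightened via a smooth diffeomorphism (the \emph{boundary pieces}). On an interior ball, I would multiply $u$ by a cutoff $\eta\in C_c^\infty(\Omega)$ and expand: $\eta u$ then satisfies, on all of $\R^N$, an equation of the schematic form $-\Delta_p(\eta u)= \mathrm{div}\,\widetilde{\mathbf F}_\eta + g_\eta$, where $\widetilde{\mathbf F}_\eta$ combines the zero-extension of $\eta\mathbf F$ with terms involving $u\,\nabla\eta$, and $g_\eta$ is a source term controlled (after writing it in divergence form via the usual trick of integrating by parts against a test function) by $|\nabla u|^{p-1}$ in $L^{p'}$ thanks to the base-case estimate. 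Iwaniec's whole-space theorem applied with exponent $s$ then yields $|\nabla u|^{p-1}\in L^s$ on the smaller ball. For the boundary pieces, after straightening $\partial\Omega$ one extends $u$ oddly through the flat portion; since $u$ vanishes on $\partial\Omega$ the extension remains in $W^{1,p}$ and solves a related equation in a full neighborhood, at which point the interior argument applies.

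The main obstacle is that the cutoff/extension steps are genuinely nonlinear: expanding $|\nabla(\eta u)|^{p-2}\nabla(\eta u)$ produces error terms that involve $|\nabla u|^{p-1}$ itself, not merely strictly lower-order quantities, so the estimate cannot be closed in one stroke by the whole-space result alone. The standard way around this is to first upgrade the integrability of $|\nabla u|$ by a Gehring--Giaquinta--Modica reverse H\"older inequality, gaining a small exponent above $p$, and then feed this improved integrability back into the divergence-form estimate. Iterating (or invoking an extrapolation form of the whole-space theorem) reaches any $s<\infty$ when $s\ge N/(p-1)$ and the sharp target exponent otherwise. The boundary flattening further produces variable coefficients in the transformed equation; their perturbative treatment, and the careful bookkeeping between the interior and boundary contributions, is the technically delicate part of the argument and the genuine heart of the proof.
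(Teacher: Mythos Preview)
The paper does not actually prove this theorem: it is quoted as a known result, with the whole-space case attributed to Iwaniec, the interior estimates to DiBenedetto--Manfredi, and the global estimates on bounded domains to Byun--Wang--Zhou (Theorem 1.8 there). So there is no ``paper's own proof'' to compare against beyond these references.

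Your outline points in a reasonable direction, and your base case $s=p'$ is correct, but the rest is a sketch of a research program rather than a proof. Two concrete gaps:

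\textbf{(1) The boundary step is not justified.} After flattening $\partial\Omega$ the transformed equation is no longer $-\Delta_p v = \mathrm{div}\,\widetilde{\mathbf F}$ but an equation with a nonlinear operator of the form $\mathrm{div}\,A(x,\nabla v)$ with $x$-dependent coefficients coming from the Jacobian of the chart. Odd reflection across $\{x_N=0\}$ does not produce a solution of an equation to which Iwaniec's whole-space theorem applies: his result is specifically for the constant-coefficient $p$-Laplacian, and there is no ``interior argument'' you can simply re-run. Treating such variable-coefficient operators up to the boundary is exactly the content of the Byun--Wang--Zhou paper; it requires a compactness/approximation scheme together with Vitali covering and maximal-function arguments, none of which your outline supplies. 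Saying that this ``is the technically delicate part of the argument and the genuine heart of the proof'' is accurate, but it means you have not given the proof.

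\textbf{(2) The bootstrap does not close.} A Gehring--Giaquinta--Modica reverse H\"older inequality gives you $|\nabla u|\in L^{p+\varepsilon}$ for some small $\varepsilon>0$ depending on the data, not $|\nabla u|\in L^{(p-1)s}$ for arbitrary $s\ge p'$. Feeding a small gain back into the cutoff equation and iterating does not reach every $s$: the error terms from the nonlinearity are of the same order as the principal term, so each step only buys another $\varepsilon$, and there is no reason the increments sum to infinity. The actual arguments in the cited references bypass this by working with level sets of the maximal function of $|\nabla u|^p$ and a good-$\lambda$ inequality, which is a genuinely different mechanism from the Gehring-plus-iteration picture you describe.

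In short: the paper treats this statement as input from the literature, and your proposal, while plausible as a heuristic, leaves precisely the hard parts (global boundary estimates for the nonlinear, variable-coefficient problem; reaching the sharp exponent) unproved.
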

\par
In \cite{Iw} the theorem is proved in the case $\Omega = \R^N $ (\cite{Iw}, Theorem 2), and generalized in \cite{DibMan} for systems in bounded domains $\Omega $ to obtain interior estimates of the gradient. \par
The global estimates provided by Theorem \ref{Iwaniek2}  are then proved in the paper \cite{BWZ} (see Theorem 1.8), where also much more irregular domains are considered. \par  
\bigskip

Let us show how Theorem \ref{th:RegularityGradient} follows from Theorem \ref{Iwaniek2}.\par
\begin{proof}[Proof of Theorem \ref{th:RegularityGradient} ]
Let  $u \in W_0^{1,p}(\Omega ) $ be a solution of  \eqref{Cianchi-Iwaniek}, with 
$g \in L^q(\Omega )$, $1< q<N$ (and $q \geq (p^*)'$ if $1<p<N $). \par
\smallskip
Then, we claim that $g= \text{ div } \mathbf{F} $ in $\Omega  $, with 
$\mathbf{F} \in L^{s}(\Omega  ; \R^N )$ with $s= q^*= \frac {Nq}{N-q} $. 
\par
In fact, if $z$ solves the Dirichlet problem for
$- \Delta z = g \in L^q (\Omega )$, then $z \in W^{2,q} (\Omega  )$ by the Calderon-Zygmund estimates (see \cite{GiTr}),  so that 
$\nabla z  
\in L^{s} (\Omega  ; \R^N ) $, $s= q^*$, and 
$g= \text{ div } \mathbf{F} $ in $\Omega  $, with 
$\mathbf{F} := -\nabla z \in L^{s}(\Omega  ; \R^N )$,   and 
$\Vert  \mathbf{F}  \Vert _{L^s} =  \Vert  \mathbf{F}  \Vert _{L^{q^*} }   \leq C  \Vert  g  \Vert _{L^q}  $
\par
\smallskip
Moreover $q^* \geq p'$. \par
Indeed, if $p \geq N $ then $p' \leq N'= \frac N{N-1}= 1^* < q^*$ for any $q>1$. \par
On the other side, if $1<p<N $, then it is straightforward to check that  condition  $q^* \geq p'$ is equivalent to our hypothesis \ 
$q \geq (p^*)'$. \par
\smallskip
Therefore by Theorem \ref{Iwaniek2} we get that $\nabla u \in L^{(p-1)s}(\Omega ; \R^N ) $ with $s= q^*$. \par
\smallskip
If $q \geq N $ then by the same method $g=  \text{ div }( \mathbf{F} )= -\text{ div } ( \nabla z ) $ in $\Omega  $, with 
$\mathbf{F} \in L^{s}(\Omega  ; \R^N )$  for any  $s>1 $, in particular for any $s \geq p'$.\par 
By exploiting Theorem \ref{Iwaniek2} in the same way, we get the result.
\end{proof}
\par
\bigskip 
\bigskip

We will need also a local $W^{1, \infty }$ result at the boundary, namely the following

\begin{theorem}\label{BoundaryEstimates}
Let $\Omega  $ be a smooth bounded domain in $\R^N$, $N \geq 2$, and let 
$u \in C^1 (\overline {\Omega})  $ be a solution of the problem 
\begin{equation}\label{LocalCianchi-Iwaniek}
\begin{cases}
-\Delta_p (u) = g &\qquad \text{in }\, \Omega  \\
\qquad\ \ u  >0 &\qquad \text{in }\, \Omega  \\
\qquad\ \ u = \, 0 &\qquad \text{on }\, \partial\Omega, 
\end{cases}
\end{equation}
with $g \in L^{(p^*)'}(\Omega)$. \par
For $\delta >0 $, let $\Omega  _{\delta}= \{ x \in \Omega  : \text{ dist }(x ,\partial \Omega  ) > \delta \}$,
and suppose that $u \; , \; g \in L^{\infty}(\Omega  \setminus \Omega  _{ \delta})$ with 
$ \Vert g  \Vert _ {L^{\infty} (\Omega  \setminus \Omega  _{\delta}) }  \leq M $,
$\Vert u  \Vert _ {L^{\infty} (\Omega  \setminus \Omega  _{\delta}) }  \leq M$. \par
Then,  there exists a constant $C>0$ only depending on $M$ and $\delta $ such that 
$\Vert \nabla u \Vert _{L^{\infty }  (\partial \Omega) }   \leq C $
\end{theorem}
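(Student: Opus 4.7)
The plan is to deduce the boundary gradient bound from the boundary $C^{1,\alpha}$ regularity theory for quasilinear equations of $p$-Laplace type (DiBenedetto \cite{DiB}, Lieberman \cite{Lieb}). The key quantitative feature of those results is that the $C^{1,\alpha}$ norm up to the boundary is controlled purely by the $L^\infty$ norm of $u$, the $L^\infty$ norm of the right-hand side, the exponent $p$, and the $C^2$ character of $\partial\Omega$ — which is precisely the information supplied by the hypothesis.

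Concretely, since $\partial\Omega$ is compact, I would cover it by finitely many balls $B_i=B(x_i,r)$ with $x_i\in\partial\Omega$ and $0<r<\delta/4$; the number of balls depends only on $\Omega$ and $\delta$. Then $B_i\cap\Omega\subset\Omega\setminus\Omega_{\delta}$, so on each such ball one has $u=0$ on $B_i\cap\partial\Omega$, $\|u\|_{L^\infty(B_i\cap\Omega)}\le M$, and $\|g\|_{L^\infty(B_i\cap\Omega)}\le M$. The local-to-the-boundary form of Lieberman's estimate then yields, for $B_i'=B(x_i,r/2)$ and some $\alpha=\alpha(p,N)\in(0,1)$,
$$\|u\|_{C^{1,\alpha}(B_i'\cap\overline{\Omega})}\le C(p,N,M,r,\Omega).$$
Since $\partial\Omega\subset\bigcup_i B_i'$, taking the maximum of the finitely many constants produces $\|\nabla u\|_{L^\infty(\partial\Omega)}\le C(p,N,M,\delta,\Omega)$, which is the claim.

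The main obstacle is the passage from the \emph{global} statement in \cite{Lieb} to a genuinely local boundary estimate whose constant depends only on the data inside $B_i\cap\Omega$. There are two routes. The first is to inspect the proof in \cite{Lieb}, which is intrinsically local — boundary flattening followed by reflection and iteration on a fixed-size chart — and read off the local estimate directly. The second is a cutoff/comparison argument: on a slightly larger shell around $\partial\Omega$, solve the Dirichlet problem for $-\Delta_p w=\tilde g$ with $\tilde g$ the bounded extension of $g$ by zero and zero boundary data; apply the global Lieberman theorem to $w$; and control the difference $u-w$ by interior $C^{1,\tau}$ regularity (\cite{DiB}) after estimating it via the weak comparison principle (Theorem~\ref{WeakCP}) against explicit barriers built from the $L^\infty$ bound $M$. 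The integrability assumption $g\in L^{(p^*)'}(\Omega)$ enters only to make $u\in W_0^{1,p}(\Omega)$ a legitimate weak solution in the sense compatible with Theorem~\ref{Iwaniek2}; the quantitative gradient control comes exclusively from the $L^\infty$ information in the collar $\Omega\setminus\Omega_\delta$.
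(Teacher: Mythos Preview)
Your approach is correct in spirit but takes a genuinely different route from the paper. You invoke the local boundary $C^{1,\alpha}$ theory of Lieberman, cover $\partial\Omega$ by small balls contained in the collar $\Omega\setminus\Omega_\delta$, and read off a gradient bound from the local estimates; you correctly flag that the nontrivial point is extracting a \emph{local-at-the-boundary} version of \cite{Lieb} with constants depending only on the data in the collar. The paper bypasses this entirely with an elementary barrier argument: it takes the $p$-torsion function $v$ solving $-\Delta_p v=1$ in $\Omega$, $v=0$ on $\partial\Omega$ (which is $C^1(\overline\Omega)$ and positive), chooses $N=N_{M,\delta}$ large enough that $N^{p-1}\ge M\ge g$ in $\Omega\setminus\Omega_\delta$ and $Nv\ge M\ge u$ on $\partial\Omega_\delta$, applies the weak comparison principle (Theorem~\ref{WeakCP}) on the collar to get $0\le u\le Nv$ there, and then compares normal derivatives at $\partial\Omega$ (where both functions vanish) to obtain $|Du|=\partial_{\nu_i}u\le\partial_{\nu_i}(Nv)=N|Dv|\le N\sup_{\partial\Omega}|Dv|$.

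Your route yields more information (a full $C^{1,\alpha}$ bound in a neighborhood of $\partial\Omega$), but requires either inspecting the proof in \cite{Lieb} or the auxiliary comparison you sketch. The paper's barrier argument needs nothing beyond the weak comparison principle and a single fixed reference function, and in fact the authors remark that they chose it precisely because an exact reference for the localized Lieberman estimate was hard to pin down --- the very obstacle you identify.
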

\par
\medskip
Although similar estimates can be found in the literature, it was difficult for us to find the exact reference. \par
Therefore, we provide an ad-hoc proof based on comparison with the solution to a simple $p$-Laplace Dirichlet problem.
\bigskip

\begin{proof}
Let us consider the solution $v  \in W^{1,p} (\Omega) $  of the problem
\begin{equation}
\begin{cases}
-\Delta_p (v) = 1 &\qquad \text{in }\, \Omega  \\
\qquad\ \  v = \, 0 &\qquad \text{on }\, \partial \Omega   
\end{cases}
\end{equation}
By the regularity results in \cite{DiB}, \cite{Lieb} $v  \in C^1 (\overline {\Omega}) $, and by the weak and the strong maximum principle $v >0 $ in $\Omega $. \par
Since $u= |u|$ and
$|u|, |g| \leq  M $ in $ \Omega  \setminus \Omega  _{\delta} $, 
there exists  \\
$N=N_{M, \delta } >0 $  \ \ such that  \par
$ \quad 
\begin{cases}
- \Delta _p (N\, v)=N^{p-1}   \geq g = - \Delta _p (u) &\quad \text{  in  }  \Omega  \setminus \Omega  _{\delta}  \\
\qquad\qquad\qquad\   N\, v  \geq u   &\quad \text{ on }  \partial ( \Omega  \setminus \Omega  _{\delta} )
\end{cases}
$
\par
(it suffices to take \\
$N \geq M^{\frac 1{p-1}} \geq \Vert g  \Vert _ {L^{\infty} (\Omega  \setminus \Omega  _{\delta}) } ^ {\frac 1{p-1}} $  \ \  and  \ \ 
$N \geq \frac {M} {\inf _{\partial \Omega _{\delta}} v} \geq 
\frac { \Vert u  \Vert _ {L^{\infty} (\partial \Omega _{\delta}) }  } {\inf _{\partial \Omega _{\delta}} v}  $). \par
Putting  $ v_N= N_{M, \delta } \, v$ and $C= \sup _{\partial \Omega } |Dv|$,  by the weak comparison principle   we obtain that
\par
$u \leq v_N= Nv $ in $ \Omega  \setminus \Omega  _{\delta} $. 


If $x \in \partial \Omega $ and $\nu= \nu _i $ is the inner normal and 
$\alpha (t)= u (x + t \nu )$, $ \beta (t)= v_N (x + t \nu )$,
 it follows that
$\alpha (t) \leq  \beta (t)$ if $t \in [0, \delta)$, so that 
$\alpha ' (0) \leq \beta ' (0)$, i.e.
$ \frac { \partial u } {\partial \nu _i } (x) \leq  \frac { \partial v_N } {\partial \nu _i } (x)$. \par
Moreover since $u=v=0 $ on $\partial \Omega $, the size of the gradient coincides with the normal derivative, 
in the interior direction since $u$ and $v$ are positive inside $\Omega $, 
so that for any  $x \in \partial \Omega $ we have that \par
$|Du(x)|= \frac { \partial u } {\partial \nu _i }(x) \leq  \frac { \partial v_N } {\partial \nu _i }(x)=
|Dv_N(x)| = N  |Dv(x)|   \leq N \, C$
and $\big\|  |Du| \big\| _{L^{\infty}(\partial \Omega )} \leq  N_{M, \delta} \, C  =: C_{M, \delta}$.
\end{proof}

\par
\bigskip

\par
\bigskip

\section{Proofs of Theorems  \ref{p>N}-- \ref{1<p<N refined}}

\begin{proof}[Proof of Theorems \ref{p>N}-- \ref{1<p<N} ] \ \par 
\medskip

Let us start with a consequence of hypotheses $H_1)$ and $H_2)$, which are assumed in all the theorems that we are proving. \par
By hypothesis  $H_2)$ \ 
there exist $u_0 >0$ and $C_1 >0$ such that 
\be  \label{ConsequenceH2} u^{\tau } \leq   C_1 \frac {f(u)}{u^{p-1  }},\qquad\text{  if }   u \geq u_0.
\ee
In particular (if $1<p<2$, it is part of the hypothesis  $H_1)$ if $p>2$) we have that
\be  \label{DefinitaPositivita}  f(u) >0,\qquad  \text{ if } u \geq u_0. 
\ee
On the other hand, by hypothesis $H_1)$  there exists $\Lambda  \geq 0 $ such that 
\be  \label{Semipositivita}  \frac {f(u)} {u^{p-1}} \geq - \Lambda ,\qquad  \text{ for every  } u>0.
\ee
In fact if $p>2$ then $f$ is supposed to be positive in $(0, + \infty ) $ and \eqref{Semipositivita} is immediate.\par
If instead  $1<p<2$, as observed we have that $ f(u) >0 $   if  $ u \geq u_0$. \par 
Since $H_1)$, $f(0) \geq 0 $ and $f $ is  Lipschitz continuous in $[0,  u_0]$, hence    there exists $D \geq 0 $ such that \par
$f(u) \geq f(0) - Du,\quad $ if $0 \leq u < u_0 $, so that  \par
$\frac {f(u)} {u^{p-1}} \geq \frac {f(0)} {u^{p-1}} - D u^{2-p} \geq  - D u_0^{2-p} =: -\Lambda $  for every $u \in (0, u_0)$,\par
 and actually for every $u>0$ since $f(u)$ is positive in $[u_0, + \infty )$.
\par
\medskip

\emph{Step 1 - Uniform $L^1 $ estimates of   $u^{\tau}\phi ^p $ for some $\tau >0$}
\par 
\medskip

Since Picone's inequality \eqref{PiconeInequality} with $v_2=u$, $v_1= \phi _1$, the unique first eigenfunction which is positive in $\Omega $ and normalized in the $L^p$ norm, we obtain that 
\be \label{Picone1}
\int _{\Omega  } \frac {f(u)}{u^{p-1}} \phi _1 ^p \leq \lambda _1 \int _{\Omega } \phi _1 ^p = \lambda _1.
\ee
Therefore, taking into account \eqref{ConsequenceH2} and  \eqref{Semipositivita}, we have that
\begin{equation} \label{Picone3}
 \begin{split}     
 \int _{\Omega  }& u^{\tau} \phi _1 ^p \, dx   = \int _{[0 \leq u  < u_0]} u^{\tau} \phi _1 ^p \, dx+ \int _{[u  \geq u_0]}  u^{\tau} \phi _1 ^p \, dx      \\
 &\leq u_0^{\tau} \, \int _{\Omega  }  \phi _1 ^p \, dx + C_1 
 \int _{[u  \geq u_0]}  \frac {f(u)}{u^{p-1  }} \phi _1 ^p \, dx  \\
 &= u_0^{\tau} \, + C_1 
 \int _{\Omega }  \frac {f(u)}{u^{p-1  }} \phi _1 ^p \, dx - C_1
 \, \int _{[0 \leq u  < u_0]}  \frac {f(u)}{u^{p-1  }} \phi _1 ^p \, dx  
 \\
 &\leq u_0^{\tau} +  C_1  \lambda _1+ C_1\Lambda   = C,
 \end{split}  
 \end{equation}
for a constant $C$ independent of $u$.
\par
\medskip

\emph{Step 2 -  
Uniform $L^{ \infty } $ estimates near the boundary and uniform $W^{1, \infty } $ estimates at the boundary}
\par 
\medskip
Let $\phi _1 $ be the first eigenfunction, positive in $\Omega $ and normalized in the $L^p$ norm.
By \eqref{MovingPlanes} for any $ x \in \Omega  \setminus \Omega  _{\delta}$ we have  that \par
$\gamma \,  ( \, \inf _{ \Omega  _{ \frac {\delta }2}} \phi _1 ^p \, ) \, 
u(x)^ {\tau} \leq
\int _{I_x} u^{\tau}(\xi)\phi _1 ^p (\xi )\leq
\int _{\Omega} u^{\tau}\, \phi _1 ^p \, dx  \leq C  $
\par
where $C$ is the uniform constant obtained in the previous step.\par
This gives the uniform $L^{\infty}$ bounds near the boundary:
there exists a $\delta >0$ (depending on the geometry of $\Omega$ through \eqref{MovingPlanes}) and a constant $C>0 $ independent of the solution $u$,  such that 
\be \label{est:bound} 
\Vert u   \Vert _{L^{\infty }(\Omega \setminus \Omega  _{\delta })} \leq C
\ee
for any solution $u$ of \eqref{MainProblem}.\par
\medskip
Using Theorem \ref{BoundaryEstimates} we get 
\be
\left\|  \frac { \partial u}{\partial \nu} \right\| _{L^{\infty}(\partial \Omega )} \leq C 
\ee
 for a uniform constant $C$ independent of the solution.
\par
\medskip

\emph{Step 3 -  Uniform $W^{1, p } (\Omega ) $ estimates} \ 
\par 
\medskip
If $p>N $, since $N \geq 2$, by hypothesis  $H_1)$  we have that $f \geq 0$, so that also $F \geq 0$. By the Pohozaev's identity \eqref{PohozaevIdentity} we have that \par
$ \frac {p-N}p \int _{\Omega  } f(u) \, u \, dx \leq 
\frac {p-1}p
\int _ {\partial \Omega  } |\frac { \partial u}{\partial \nu}|^p (x \cdot \nu) \, ds $ and
$ |\frac { \partial u}{\partial \nu}|^p  \leq C
$ by the previous step, so that \par
$\int _{\Omega  } |\nabla u |^p \, dx = 
\int f(u)\, u \, dx \leq C  $.
\par 
\medskip
If $p=N $, by hypothesis $H_3)$  there exist $C_2 >0 $ and $s_0 >0 $ such that 
$uf(u) \leq C_2 F(u)$ if $u \geq s_0 $, so that by the Pohozaev's identity \eqref{PohozaevIdentity} (which in this case reduces to \\ 
$N \int _{\Omega  } F(u) \, dx  =
\frac {p-1}p
\int _ {\partial \Omega  } |\frac { \partial u}{\partial \nu}|^p (x \cdot \nu) \, ds  $
) \\
we have again  that \par
$\int _{\Omega  } |\nabla u |^N \, dx = 
\int f(u)\, u \, dx \leq C  $.
\par
\medskip

If $1<p<N$, again by Pohozaev's identity (which in this case can be written as \\
$p^* \int _{\Omega  } F(u) \, dx -  \int _{\Omega  } f(u) \, u \, dx =
\frac {p-1}{N-p}
\int _ {\partial \Omega  } |\frac { \partial u}{\partial \nu}|^p (x \cdot \nu) \, ds  $ ) \\
and since hypothesis $H_3')$,    there exists $C_3 >0 $ and $s_0 >0 $  such that \\
$ f(u)\, u \leq C_3 (p^* F(u)- f(u)\, u) $  if $u \geq s_0 $, so that \\
$\int _{\Omega  } |\nabla u |^p \, dx = 
\int f(u)\, u \, dx \leq C  $.
\par
\bigskip
This step ends the proof in the case $p> N $, since $W_0^{1,p}(\Omega  )$ is continuously injected in $L^{\infty}(\Omega  )$ if $p>N$. \par
\medskip
From now on we suppose that $1<p \leq N$. \par
\medskip

\emph{Step 4 -  Uniform $L^{q} (\Omega ) $ estimates for any $q < \infty $}. 
\par
\medskip
If $p=N$,  the uniform $W^{1, p } (\Omega ) $ estimate implies,  by  Sobolev's embedding,  that $u$ is uniformly bounded in $L^q (\Omega  )$ for any $q < \infty $.\par
\medskip
If $1<p<N$, we adapt to the $p$-laplacian the technique used in \cite{DeFLN} (that goes back to Brezis-Kato, see \cite{BrKa}) to get the $L^q$ estimates for any finite $q \geq 1$. \par
Testing the equation $- \Delta _p u = f(u)$ with $u^t$, $t \geq 1$, we get that \par
$t \int _{\Omega  } |\nabla u |^p u^{t-1} \, dx = \int  f(u) u^t \,dx $ . \par
Since $\big|\nabla u^{\frac {p-1+t} {p}  }\big| ^p = \big(\frac {p-1+t} {p}  \big) ^p |\nabla u| ^p u^{t-1}
$, if we put  $\alpha _t= \big(\frac {p} {(p-1+t)}   \big) ^p  $  we can write the previous equation as 
\begin{equation}
\alpha _t \, t \int _{\Omega  }  \big|\nabla u^{\frac {p-1+t} {p}  }\big| ^p =   \int _{\Omega  } f(u) u^t
\end{equation}
If $ \gep >0 $
by hypothesis $H_4')$  there exists $s_{\epsilon}$ such that \\
$|f(s)| s^t \leq \epsilon s^{p^*-1+t}$ if $s \geq s_{\epsilon}$, so that denoting by $C_t$ a uniform constant depending also on $t$, we get that \\
$ \int _{\Omega  }  \big|\nabla u^{\frac {p-1+t} {p}  }\big| ^p \leq  C_t \big(C_1+
\epsilon \int _{\Omega  } u^{p^*-1+t}   \, dx \big)  = C_t  +  \epsilon C_{t} 
\int _{\Omega  } u^{t+p-1} u^{p^*-p}  \, dx 
$ \par
By Sobolev's  inequality, and   H\" older's inequality with exponents $\frac {p^*}{p}$, $\frac {p^*}{p^*-p}$, we get that \\
$  \Big( \int _{\Omega  }   u^{\frac {p-1+t} {p} p^* } \, dx \Big) ^{\frac {p}{p^*}} \leq C   \int _{\Omega  }  \big|\nabla u^{\frac {p-1+t} {p}  }\big| ^p \leq 
C_t  +  \epsilon C_{t} 
\int _{\Omega  } u^{t+p-1} u^{p^*-p}  \, dx $ \par
$ 
\leq C_t  +  \epsilon C_{t} 
\Big(\int _{\Omega  } u^{ \frac {t+p-1}pp^*}\Big)^{\frac p {p^*}} \Big(\int _{\Omega  } u^{p^*}  \, dx \Big)^{\frac {p^*-p} {p^*}}
\leq  C_t +  \epsilon C_{t} \Big(\int _{\Omega  } u^{ \frac {t+p-1}pp^*}\Big)^{\frac p {p^*}}
$
since by Step 3 we have that $\int |\nabla u |^p $ is uniformly bounded, hence also 
$ (\int _{\Omega  } u^{p^*}  \, dx ) $ is uniformly bounded. \par
Taking $\epsilon $ small we get that \\
$\int u^{ \frac {t+p-1}pp^*} $ is uniformly bounded for any fixed $1\leq t < \infty $, so that \par
$\int u^{ q} $ is uniformly bounded for any fixed $q \geq p^* $ (and since $\Omega $ is bounded in fact for any 
$q \in [ 1, \infty )$).
\par
\medskip

\emph{Step 5 -  $L^{\infty  } (\Omega ) $ uniform estimates.}
\par 
\medskip
If $p=N$, since $u$ is uniformly bounded in $L^q (\Omega  )$  for any  fixed $q< \infty $, by hypothesis 
$H_4)$  also 
$f(u)$ is uniformly bounded in $L^q (\Omega  )$  for any  fixed $q< \infty $.\par
Taking any $ q $ with $1<q<N$, we get that $f(u)$ is uniformly bounded in $L^q (\Omega  )$ so that by the regularity results cited before (see Theorem \ref{th:RegularityGradient}),  $|\nabla u|  $ is 
uniformly bounded in $L^{(N-1)q^*} (\Omega  )$, with $(N-1)q^* >(N-1)1^* =N$. \par
This implies by Sobolev's embedding that $u$ is uniformly bounded in $L^{\infty}(\Omega  )$.\par
\medskip
If instead $1<p<N $, since $u$ is uniformly bounded in $L^q (\Omega  )$  for any  fixed $q< \infty $, by hypothesis $H_4')$  also 
$f(u)$ is uniformly bounded in $L^q (\Omega  )$  for any  fixed $q< \infty $.\par
Taking a $q$ with    $\frac Np < q <N $ (observe that $(p^*)' < \frac Np $ exactly when $1<p<N$),
by the regularity results cited before (see Theorem \ref{th:RegularityGradient}),  $|\nabla u|  $ is 
uniformly bounded in $L^{(p-1)q^*} (\Omega  )$, and \  $(p-1)q^* >N$ \  if \  $q> \frac Np$.\par
This implies again by Sobolev's embedding that $u$ is uniformly boun\-ded in $L^{\infty}(\Omega  )$.

\end{proof}

\par
\bigskip

\begin{proof}[Proof of Theorem \ref{1<p<N refined}] \ \par
Proceeding exactly as in the previous theorems we get uniform $L^{\infty } $ estimates near the boundary, see \eqref{est:bound}.
Next, instead of getting uniform estimates of $\int _{\Omega  } |\nabla u |^p \, dx = 
\int _{\Omega } uf(u) \, dx $ we get, from the condition $H_3'')$ and the Pohozaev's identity that 
$\qquad \int _{\Omega } uf(u) H(u) \, dx \leq C $ , 
where $C$ is a constant that does not depend on the solution. 
From this, since by hypothesis $H_2)$  we have that there exists $s_0 >0 $ such that $f(s) >0 $ for
$s \geq s_0$, it follows easily that
\begin{equation} \label{FirstEstimate} 
\int _{\Omega } u| f(u) | H(u) \, dx \leq C. 
\end{equation}
We observe now that by hypothesis $H_4'')$ , since $p^* -1 = \frac  {N(p-1)+p}{N-p}$, we have that \ \ 
$\lim _{s \to +\infty } \frac { |f(s)|^{\frac 1{p^*-1} } }  {s[H(s)]^{\frac p{N(p-1)+p}}  }  = 0 $ , 
so that, multiplying numerator and denominator by $|f(s)| H(s)^{\frac {N(p-1)}{N(p-1)+p}}$
we get that there exists a constant $C>0$ such that
\begin{equation} 
|f(s)|^{1+\frac 1{p^*-1} } H(s)^{\frac {N(p-1)}{N(p-1)+p}} \leq 
s |f(s)| H(s) + C.
\end{equation}
Using \eqref{FirstEstimate}  we get that
\begin{equation}\label{SecondEstimate}  
\int _{\Omega } |f(u)|^{1+\frac 1{p^*-1} } H(u)^{\frac {N(p-1)}{N(p-1)+p}} \leq  C.
\end{equation}
Consequently, for any $q>N/p,$  since  $N/p>1+\frac1{p^\star -1}$ precisely when $1<p<N$, and  $H$ is non-increasing,
\begin{eqnarray}\label{f:q}
&\displaystyle \int_{\Omega }& \left|f\big(u(x)\big)\right|^q\, dx  \le\\
&\le& \frac{\displaystyle \int_{\Omega } \left|f\big(u(x)\big)\right|^{1+\frac1{p^\star -1}} H(u)^\frac{N(p-1)}{N(p-1)+p}\ 
\left|f\big(u(x)\big)\right|^{q-1-\frac1{p^\star -1}}\, dx}{H\big(\|u\|_\infty\big)^\frac{N(p-1)}{N(p-1)+p}} \nonumber\\
&\le & C\
\frac{\left\|f\big(u(\cdot)\big)\right\|_\infty^{q-1-\frac1{p^\star -1}}}{H\big(\|u\|_\infty\big)^\frac{N(p-1)}{N(p-1)+p}},\nonumber
\end{eqnarray}

Let us restrict $q\in (N/p,N).$ From  Theorem \ref{th:RegularityGradient}, i),   we have that
\begin{align}\label{grad:reg}
\|\nabla u\|_{L^{\frac{Nq(p-1)}{N-q}} (\Omega )}&\leq     C\   \|f(u)\|_{L^q(\Omega )}^\frac1{p-1}\\
&\leq   C\
\left(\frac{\left\|f\big(u(\cdot)\big)\right\|_\infty^{\ 1-\frac1{q}-\frac1{(p^\star -1)q}}}{\Big[H\big(\|u\|_\infty\big)\Big]^\frac{N(p-1)}{[N(p-1)+p]q}}\right)^{\frac{1}{p-1}},\nonumber
\end{align}
and  note that, since $q > \frac Np$,
\begin{equation}\label{grad:reg:3}
r:=\frac{Nq(p-1)}{N-q}>N.
\end{equation}

From Morrey's Theorem, (see \cite[Theorem 9.12 and Corollary 9.14]{Brezis}),  there exists a constant $C$ only dependent on $\Omega ,$ $r$ and $N$ such that
\begin{equation}\label{Morrey:2}
|u(x_1)-u(x_2)|\le C |x_1-x_2|^{1-N/r}\|\nabla u\|_{L^{r}(\Omega )},\quad \forall x_1,x_2\in \Omega .
\end{equation}
Therefore, for all $x\in B(x_1,R)\subset\Omega $
\begin{equation}\label{Morrey:1}
|u(x)-u(x_1)|\le  C\ R^{1-\frac{N}{r}}\|\nabla u\|_{L^{r}(\Omega )}.
\end{equation}

\par
\bigskip

From now on, we shall argue by contradiction,  being the main idea that if a sequence of solutions $u_k$ is unbounded in $L^{\infty}(\Omega)$, then also the integrals
$\int _{\Omega } u_k | f(u_k) | H(u_k) \, dx $ tends to infinity. We achieve it by estimating the radius   $R_k$ such that $u_k(x)\ge\frac12\ \|u_k\|_\infty=\frac12\  u_k(x_k)$ for any   $x\in B(x_k,R_k)$.  \par
Let   $\{u_k\}_k$ be a sequence of $C^1(\overline {\Omega})$ positive solutions to \eqref{MainProblem} and assume that
\begin{equation}\label{uk:infty}
\lim_{k \to \infty} \|u_k\| = + \infty,\qquad \text {where}\quad \|u_k\|:=\|u_k\|_{\infty}.
\end{equation}
From the previous estimate near the boundary, let $C,\delta >0$ be as in  \eqref{est:bound}, see Theorem \ref{BoundaryEstimates}.
Let $x_k \in \overline{\Omega _{\delta}}$ be such that
$$u_k(x_k) = \max_{\Omega _{\delta}} u_k= \max_{\Omega } u_k.$$
By taking a subsequence if needed, we may assume that
there exists $x_0 \in \overline{\Omega _{\delta}}$ such that
\begin{equation}\label{x0}
\lim_{k\to \infty} x_k = x_0\in\overline{\Omega _{\delta}},\qquad\mbox{and}\quad d_0:=dist(x_0,\partial \Omega  )\ge\delta>0.
\end{equation}

Let us choose  $R_k$ such that $B_k=B(x_k,R_k)\subset\Omega $, and 
\begin{equation*}\label{Sobolev:77}
u_k(x)\ge
\frac12\ \|u_k\|\qquad\mbox{for any}\quad  x\in B(x_k,R_k),
\end{equation*}  
and there exists $y_k\in\partial B(x_k,R_k)$ such that
\begin{equation}\label{Sobolev:78:2}
u_k(y_k)=\frac12\ \|u_k\|.
\end{equation}

Let us denote by 
\begin{equation*}\label{1:Sobolev:7}
m_k:=\displaystyle\min_{[\|u_k\|/2,\|u_k\|]}f,\qquad M_k:=\displaystyle\max_{[0,\|u_k\|]}f.
\end{equation*}
From definition,  we obtain
\begin{equation}\label{1:Sobolev:66}
m_k\le f\big(u_k(x)\big) \quad\mbox{if }x\in B_k,\qquad \qquad  f\big(u_k(x)\big) \le M_k \quad\forall x\in\Omega .
\end{equation} 

Then, reasoning as in \eqref{f:q},   we obtain
\begin{eqnarray}\label{grad:reg:cr:2}
\int_{\Omega } \left|f\big(u_k\big)\right|^q\, dx &\le &  C\
\frac{M_k^{\ q-1-\frac1{p^\star -1}}}{H\big(\|u_k\|\big)^\frac{N(p-1)}{N(p-1)+p}}.
\end{eqnarray}

From gradient regularity for $p$-laplacian equations, see \eqref{grad:reg},  we deduce
\begin{equation}\label{grad:reg:2}
\|\nabla u_k\|_{L^{r}(\Omega )}
\le   C\ \left(
\frac{M_k^{\ 1-\frac1{q}-\frac1{(p^\star -1)q}}}{\Big[H\big(\|u_k\|\big)\Big]^\frac{N(p-1)}{[N(p-1)+p]q}}\right)^{\frac{1}{p-1}}.
\end{equation}
Therefore, from Morrey's Theorem, see \eqref{Morrey:1}, for any $x\in B(x_k, R_k)$
\begin{equation}\label{Sobolev:2}
|u_k(x)-u_k(x_k)|\le C\ (R_k)^{1-\frac{N}{r}}\
\left(\frac{M_k^{\ 1-\frac1{q}-\frac1{(p^\star -1)q}}}{\Big[H\big(\|u_k\|\big)\Big]^\frac{N(p-1)}{[N(p-1)+p]q}}\right)^{\frac{1}{p-1}}.
\end{equation}

Particularizing $x=y_k$ in the above inequality and from \eqref{Sobolev:78:2} we obtain
\begin{align}\label{Sobolev:3}
C\ (R_k)^{1-\frac{N}{r}}\
\left(\frac{M_k^{\ 1-\frac1{q}-\frac1{(p^\star -1)q}}}{\Big[H\big(\|u_k\|\big)\Big]^\frac{N(p-1)}{[N(p-1)+p]q}}\right)^{\frac{1}{p-1}}
&\ge 
\frac12 \|u_k\|,
\end{align}
which implies
\begin{equation}\label{Sobolev:4}
(R_k)^{1-\frac{N}{r}}\ge  \frac1{2C}\,
\frac{\|u_k\|\ \Big[H\big(\|u_k\|\big)\Big]^\frac{N}{[N(p-1)+p]q}}{M_k^{\ \left[1-\frac1{q}-\frac1{(p^\star -1)q}\right]\frac1{p-1}}}  ,
\end{equation}
or equivalently
\begin{equation}\label{Sobolev:5}
R_k\ge \left(\frac1{2C}\,\frac{\|u_k\|\ \Big[H\big(\|u_k\|\big)\Big]^\frac{N}{[N(p-1)+p]q}}
{M_k^{\ \left[1-\frac1{q}-\frac1{(p^\star -1)q}\right]\frac1{p-1}}}\right)^{1/\big(1-\frac{N}{r}\big)}.
\end{equation}

Consequently,  taking into account \eqref{1:Sobolev:66}, and that $H$ is non-increasing
\begin{eqnarray*}\label{1:u:fu:C:3}
\int_{B(x_k,R_k)} u_k |f(u_k)|H(u_k)\, dx &\ge&   \frac{1}{2}\|u_k\| H(\|u_k\|) m_k \ \omega\,(R_k)^N,
\end{eqnarray*}
where $\omega=\omega_N$ is the volume of the unit ball in $\R^N.$

Due to $B(x_k,R_k)\subset\Omega $ , substituting inequality \eqref{Sobolev:5},  and rearranging terms,  we obtain
\begin{eqnarray*}\label{1:u:fu:C:4}
&& {\kern -1cm }\int_\Omega  u_k |f(u_k)| H(u_k)\, dx\\ 
&&\ge  \frac{1}{2}\|u_k\| H(\|u_k\|) m_k\ \omega\
\left(\frac1{2C}\,\frac{
	\|u_k\|\ \Big[H\big(\|u_k\|\big)\Big]^\frac{N}{[N(p-1)+p]q}}
{M_k^{\ \left[1-\frac1{q}-\frac1{(p^\star -1)q}\right]\frac1{p-1}}}\right)^{\frac{N}{1-\frac{N}{r}}}
\\
&&= C\, m_k\ 
\left(\frac{\|u_k\|^{1+\frac1{N}-\frac{1}{r}}H\big(\|u_k\|\big)^{\frac1{N}-\frac{1}{r}+\frac{N}{[N(p-1)+p]q}}}
{M_k^{\ \left[1-\frac1{q}-\frac1{(p^\star -1)q}\right]\frac1{p-1}}}
\right)^{\frac{1}{\frac1{N}-\frac{1}{r}}}\\
&&= C\, \frac{m_k}{M_k} \
\left(\frac{\|u_k\|^{1+\frac1{N}-\frac{1}{r}}H\big(\|u_k\|\big)^{\frac1{N}-\frac{1}{r}+\frac{N}{[N(p-1)+p]q}}}
{M_k^{\ \left[1-\frac1{q}-\frac1{(p^\star -1)q}\right]\frac1{p-1}-\frac1{N}+\frac{1}{r}}}
\right)^{\frac{1}{\frac1{N}-\frac{1}{r}}}
\end{eqnarray*}

At this moment, let us observe that from hypothesis $H_5)$ 
\begin{equation}\label{1:u:fu:C:7}
\frac{m_k}{M_k}
\ge C,\qquad\mbox{for all}\quad k\qquad\mbox{big enough.}
\end{equation}

Hence, taking again into account hypothesis $H_5)$, 
we can assert that
\begin{eqnarray*}\label{1:u:fu:C:8}
&& {\kern -1.5cm }\int_\Omega  u_k |f(u_k)| H(u_k)\, dx \\
&\ge&
C\
\left(\frac{\|u_k\|^{1+\frac1{N}-\frac{1}{r}}\Big[H\big(\|u_k\|\big)\Big]^{\frac1{N}-\frac{1}{r}+\frac{N}{[N(p-1)+p]q}}}
{M_k^{\ \left[1-\frac1{q}-\frac1{(p^\star -1)q}\right]\frac1{p-1}-\frac1{N}+\frac{1}{r}}}\right)^{\frac{1}{\frac1{N}-\frac{1}{r}}}\\
&\ge&
C\
\left(\frac{\|u_k\|^{1+\frac1{N}-\frac{1}{r}}\Big[H\big(\|u_k\|\big)\Big]^{\frac1{N}-\frac{1}{r}+\frac{N}{[N(p-1)+p]q}}}
{\bigg[f\big(\|u_k\|\big)\bigg]^{\ \left[1-\frac1{q}-\frac1{(p^\star -1)q}\right]\frac1{p-1}-\frac1{N}+\frac{1}{r}}}\right)^{\frac{1}{\frac1{N}-\frac{1}{r}}}
\end{eqnarray*}

Let us denote by
\begin{align*}
a&=1+\frac1{N}-\frac{1}{r},
\qquad
b=\frac1{N}-\frac{1}{r}+\frac{N}{[N(p-1)+p]q},
\\
c&=\left[1-\frac1{q}-\frac1{(p^\star -1)q}\right]\frac1{p-1}-\frac1{N}+\frac{1}{r}.
\end{align*}
By substituting $r=\frac{Nq(p-1)}{N-q},$ see \eqref{grad:reg:3}, we obtain
\begin{equation*}
a=\frac{q[N(p-1)+p]-N}{qN(p-1)},
\quad
b=\frac{pq-\frac{Np}{N(p-1)+p}}{qN(p-1)},
\quad
c=\frac{q(N-p)- \frac{N}{p^\star -1}}{qN(p-1)},
\end{equation*}
and it is easy to see that $a,b,c>0,$ and
$\quad\frac{a}{c}=p^\star -1,\quad \frac{b}{c}=\frac{p}{N-p}$.

Finally, from hypothesis $H_4'')$  we deduce
\begin{eqnarray*}\label{1:u:fu:C:9}
&& {\kern -1.5cm }\int_\Omega  u_k |f(u_k)| H(u_k)\, dx \\
&\ge&
C\
\left(\frac{\|u_k\|^{p^*-1}\Big[H\big(\|u_k\|\big)\Big]^\frac{p}{N-p }}
{f\big(\|u_k\|\big)}\right)^{\frac{\frac{q(N-p)- \frac{N}{p^\star -1}}{qN(p-1)}}{\frac1{N}-\frac{1}{r}}}
\to  \infty\quad \text{as } k\to \infty
\end{eqnarray*}
which contradicts \eqref{FirstEstimate}, ending  the proof.
\end{proof}

We end this section by giving an example in the case $1<p<N$ of an almost critical nonlinearity $f$ that cannot be handled with the help of Theorem \ref{1<p<N}, neither with the blow-up methods that rely on the exact behavior of $f$ at infinity as a subcritical power, neither with the methods of \cite{DeFLN} (for $p=2$),
but that fulfills the hypotheses of Theorem \ref{1<p<N refined}.
\begin{example}\label{cor:apriori:ex}
Assume that $\Omega \subset \R ^N $ is a smooth bounded convex domain, $1<p<N$, and   
$u>0$ is a   $C^1 (\overline {\Omega})$ solution to
\begin{equation}
	\label{ex:elliptic:problem}
	\left\{ \begin{array}{rcll} -\Delta_p u
		&=&\dfrac{u^{p^\star -1}}{[\ln(e+u)]^\alpha}, & \qquad \mbox{in } \Omega, \\ u&=& 0, & \qquad \mbox{on } \partial \Omega,
	\end{array}\right.
\end{equation}
with $\alpha> p/(N-p )$.
\par
Then, there exists a uniform constant $C ,$ depending only on $\Omega$ and $f$ but not on the solution,  such that   
$\quad\|u\|_{L^\infty (\Omega)}\leq C.
$
\end{example}

\begin{proof} We will prove that
$f(s)  = s^{p^\star -1}/\ln(e+s)^\alpha$ with $\alpha> p/(N-p )$ satisfies our hypotheses  for  $H(s)=1/\ln(e+s)$. Hypotheses  $H_1)$-$H_2)$ and $H_5)$ hold trivially. Let us prove $H_3'') $ and $H_4'') $.
\par
\smallskip

\emph{$H_3'')$} \ \ \ \  From definition, and integrating by parts
	\begin{eqnarray}\label{eq:lim:Fsf0}
		F(t) &=& \int_0^t\dfrac{s^{p^\star -1}}{[\ln(e+s)]^\alpha}\, ds  \nonumber\\
		&=&  \frac1{p^\star }\, \dfrac{t^{p^\star }}{[\ln(e+t)]^\alpha}
		+\frac{\alpha}{p^\star }\int_0^t \left(\dfrac{1}{\ln(e+s)}\right)^{\alpha+1}\,
		\dfrac{s^{p^\star }}{e+s}\, ds
	\end{eqnarray}
Therefore, using also l'H\^{o}pital rule, and  simplifying we can write
	\begin{eqnarray*}\label{eq:lim:Fsf}
{\kern -.5cm}\lim_{t \to +\infty}&& {\kern -.5cm}
	 \frac{p^\star  F(t)-tf(t)}{tf(t)H(t)}
		= \lim_{t \to +\infty}\frac{\alpha\displaystyle\int_0^t \left(\dfrac{1}{\ln(e+s)}\right)^{\alpha+1}\,
			\dfrac{s^{p^\star }}{e+s}\, ds}{\dfrac{t^{p^\star }}{[\ln(e+t)]^{\alpha+1}}}\nonumber
		\\[.2cm]
		&&{\kern -2cm}= \lim_{ t\to +\infty}\frac{\alpha\left(\dfrac{1}{\ln(e+t)}\right)^{\alpha+1}\,
			\dfrac{t^{p^\star }}{e+t}}{p^\star \dfrac{t^{p^\star -1}}{[\ln(e+t)]^{\alpha+1}}
			-(1+\alpha)\left(\dfrac{1}{\ln(e+t)}\right)^{\alpha+2}\,
			\dfrac{t^{p^\star }}{e+t}} 
		=\frac{\alpha}{p^\star }>0 ,
	\end{eqnarray*}
and so $H_3'')$  holds.
	\par
\smallskip
	
	\emph{$H_4'')$} \ \ \   From definition of $f$
	and $H$, for any $\alpha>\dfrac{p}{(N-p )},$
	$$  \displaystyle\lim_{t \to +\infty}
	\frac{f(t)}{t^{p^\star -1}\ \big[H(t)\big]^{\frac{p}{N-p }}}
	=\lim_{t \to +\infty}\big[\ln(e+t)\big]^{\frac{p}{N-p }-\alpha}=0.$$
	 
\end{proof}
\par
\bigskip

\section{Proof of Theorem \ref{Existence}}
Here we prove Theorem \ref{Existence}, adapting to our case (and in some point simplifying) the proofs given for $p=2$ in \cite{DeFLN}  and for $1<p<N $ in \cite{AzCl, Ru, Zou}, and referring to this papers for other applications (see e.g.  \cite{AzCl} and \cite{DeFLN}  for the proof of the existence of a continuum of solutions to some related parameter problem). \par
Throughout the section we suppose that $f$ satisfies the hypothesis $H_0)$, as well as the hypotheses of one of the previous 
Theorems \ref{p>N}---\ref{1<p<N refined}. \par
\bigskip
The connection between a priori estimates and existence theorems relies on some topological argument using degree theory. \par
The abstract result, which goes back to Krasnoselskii \cite{Kr} and Amann \cite{Amann76}, and has been adapted by  De Figueiredo et al. and other authors, can be given in several formulation. We will state it as in \cite{Ru} as follows.
\begin{theorem}\label{Kras}
Let $C $ a cone in the Banach space $X$ and $K: C \to C $ continuous and compact, with $K(0)=0$.
Suppose that there exist $r>0 $, $R>r$ and a compact homotopy $H: [0,1] \times C \to C $ such that \par
$H(0,u)=K(u)$ for any $u \in C $ \ \ and
\begin{itemize}
\item[a) ] $u \neq s K(u) $ for any $s \in [0,1]$, $u \in C $ with $\Vert u \Vert =r $
\item[b) ]  $ H(t,u) \neq u $ for any $t \in [0,1]$, $u \in C $ with $\Vert u \Vert =R $ 
\item[c) ]  $ H(1,u) \neq u $ for any   $u \in C $ with $\Vert u \Vert  \leq R $ 
\end{itemize}
If $D= \{ u \in C : r < \Vert u \Vert < R   \}$
then $K $ has a fixed point in $D$	
\end{theorem}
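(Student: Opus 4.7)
The natural framework here is the fixed point index for compact maps on a cone in the sense of Amann, since all three hypotheses are nonvanishing conditions that translate directly into index computations. The plan is to compute the index of $K$ on two relatively open subsets of $C$, namely $U_r=\{u\in C:\|u\|<r\}$ and $U_R=\{u\in C:\|u\|<R\}$, and deduce by the additivity/excision property that the index on the annulus $D=U_R\setminus\overline{U_r}$ is nonzero, which forces a fixed point.

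\emph{Step 1: compute $i(K,U_r,C)=1$.} Consider the homotopy $G:[0,1]\times\overline{U_r}\to C$, $G(s,u)=sK(u)$. It is compact (as $K$ is compact) and $G(0,\cdot)\equiv 0$, whose unique fixed point is $0\in U_r$, giving index $1$. Hypothesis (a) says precisely that $G(s,u)\neq u$ for every $s\in[0,1]$ and $u\in C$ with $\|u\|=r$, so the homotopy has no fixed points on the boundary. By homotopy invariance of the index,
\[
i(K,U_r,C)=i(G(1,\cdot),U_r,C)=i(G(0,\cdot),U_r,C)=1.
\]

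\emph{Step 2: compute $i(K,U_R,C)=0$.} Now use the given compact homotopy $H:[0,1]\times C\to C$. Hypothesis (b) guarantees that $H(t,\cdot)$ has no fixed points on $\partial U_R\cap C$ for any $t\in[0,1]$, so homotopy invariance applied to $H$ on $\overline{U_R}$ gives
\[
i(K,U_R,C)=i(H(0,\cdot),U_R,C)=i(H(1,\cdot),U_R,C).
\]
Hypothesis (c) says that $H(1,\cdot)$ has no fixed points in $\overline{U_R}$, in particular none in $U_R$, and by the solution/existence property of the index this forces $i(H(1,\cdot),U_R,C)=0$. Hence $i(K,U_R,C)=0$.

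\emph{Step 3: additivity and conclusion.} Since $r<R$ and $K$ has no fixed points on $\partial U_r\cap C$ by Step 1 (take $s=1$ in (a)), the additivity property of the index on the disjoint open pieces $U_r$ and $D=U_R\setminus\overline{U_r}$ yields
\[
i(K,U_R,C)=i(K,U_r,C)+i(K,D,C),
\]
so $i(K,D,C)=0-1=-1\neq 0$. By the existence property of the fixed point index, $K$ must have a fixed point in $D$, proving the claim.

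The only delicate point is to make sure that the fixed point index for compact maps on cones is available in the form needed; once one invokes Amann's construction (or, equivalently, the Leray--Schauder degree applied after retraction to the cone), the homotopy invariance, additivity and existence properties used above are standard. The argument is entirely formal once (a), (b), (c) are read as nonvanishing-on-boundary and nonvanishing-on-domain conditions for the appropriate maps. No compactness or continuity issue beyond those built into the hypotheses on $K$ and $H$ enters the proof.
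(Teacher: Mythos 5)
Your proposal is correct and follows essentially the same route as the paper, which likewise computes the cone fixed point index: $i_C(K,B_r)=1$ from a), $i_C(K,B_R)=0$ from b) and c), and then additivity gives $i_C(K,D)=-1$, hence a fixed point in $D$. Your write-up just makes explicit the homotopies and index properties that the paper leaves as a one-line remark.
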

In fact, for the proof only the basic properties of topological degree are needed (it is easy to see that if $i_C$ is the topological index we have  that $i_C (K, B_R )=0 $ by c) and b),  while 
$i_C (K, B_r )=1  $ by a), so that $i_C (K, D )=- 1  $ and $K$ has a fixed point in $D$).	
\par
\medskip
Before proving Theorem \ref{Existence}, let us first make some remarks.\par
Let $\gl \in [0, + \infty ) $ and consider the problem

\begin{equation}\label{MainProblemLambda}
\begin{cases}
-\Delta_p (u) = f(u) + \gl  &\qquad \text{in }\, \Omega  \\
\qquad\ \ \, u >\,0 &\qquad  \text{in }\, \Omega  \\
\qquad\ \ \, u = \, 0 &\qquad \text{on }\, \partial\Omega. 
\end{cases}
\end{equation}

Let us observe that it is easy to see from the proofs in Section 3 that not only
$f_{\gl }$ fulfills the same hypotheses of $f$, but also for any fixed $\lambda _0 >0 $  there exists a constant $C_{\lambda _0} >0$ such that
\be \label{costantepertuttilambda} 
\Vert u_{\gl }  \Vert _{L^{\infty }(\Omega )} \leq C _{\lambda _0}
\ee
for any solution $u_{\gl }$ of \eqref{MainProblemLambda} and
 \emph{for any $\gl  \in [0, \lambda _0 ] $}.
\par
\medskip
Moreover the proofs  in  Section 3 show that   under the hypotheses of one among Theorems \ref{p>N}---\ref{1<p<N refined},  there exists an $\epsilon>0$ (depending on the geometry of $\Omega$ through \eqref{MovingPlanes}) and a constant $C>0 $ (the  a priori bound in a neighborhood of the boundary obtained in the proof of those theorems, see \eqref{est:bound} ) such that 
\be \label{costanteuniversale} 
\Vert u_{\gl }  \Vert _{L^{\infty }(\Omega \setminus \Omega  _{\epsilon})} \leq C
\ee
for any solution $u_{\gl }$ of \eqref{MainProblemLambda} and \emph{for any $\gl  \in [0, + \infty ) $}.\par
In other words the constants $C$ obtained in Step 1 and in \eqref{est:bound} in the proof of Theorems \ref{p>N}---\ref{1<p<N refined} is independent of $\gl  \in (0, + \infty) $. \par
Analyzing in particular Step 1 of the proof, where the Picone's identity is exploited,   this assertion can be seen.\par
The  same identity shows then the following property. 

\begin{lemma}\label{costantelambda0}
There exists $\gl _0 >0 $ such that the problem
\be  \label{eqlemma}
\begin{cases}
-\Delta_p (u)   = f(u) +  \gl      &\qquad \text{in }\, \Omega  \\
\qquad\ \ \, u >\,0 &\qquad  \text{in }\, \Omega  \\
\qquad\ \ \, u = \, 0 &\qquad \text{on }\, \partial\Omega, 
\end{cases}
\ee
has no solutions if $\gl \geq \gl _0 $.
\end{lemma}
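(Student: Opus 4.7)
The plan is to use Picone's inequality (Theorem in Subsection~2.3) with the first eigenfunction $\phi_1$ against any positive solution $u_\lambda$ of \eqref{eqlemma}, extracting an inequality of the form $\int_\Omega g_\lambda(u_\lambda)\phi_1^p\,dx \le \lambda_1$ where $g_\lambda(s)=(f(s)+\lambda)/s^{p-1}$. The key point is then that the uniform lower bound $\inf_{s>0} g_\lambda(s)$ tends to $+\infty$ as $\lambda\to+\infty$, which immediately forces the nonexistence of solutions for $\lambda$ large.

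More concretely, first I would set $v_1=\phi_1$, $v_2=u_\lambda$ in Picone's inequality \eqref{PiconeInequality} and integrate on $\Omega$. Since $\phi_1,u_\lambda\in C^1(\overline\Omega)$ with $\phi_1,u_\lambda>0$ in $\Omega$ and both vanishing on $\partial\Omega$, one has (as in Step~1 of the previous section) that $\phi_1^p/u_\lambda^{p-1}\in W_0^{1,p}(\Omega)$ (using Hopf's Lemma, Theorem~\ref{hop}, for $u_\lambda$ to control the quotient near $\partial\Omega$), so the quotient is a legitimate test function for the equation $-\Delta_p u_\lambda=f(u_\lambda)+\lambda$. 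Integration by parts then gives
\begin{equation}
\int_\Omega \frac{f(u_\lambda)+\lambda}{u_\lambda^{p-1}}\,\phi_1^p\,dx \;\le\; \int_\Omega |\nabla\phi_1|^p\,dx \;=\;\lambda_1.
\end{equation}

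Next I would observe that the function $g_\lambda(s)=(f(s)+\lambda)/s^{p-1}$ satisfies $\inf_{s>0}g_\lambda(s)\to+\infty$ as $\lambda\to+\infty$. Indeed, for any fixed $s_*>0$, on $(0,s_*]$ we have $g_\lambda(s)\ge -\Lambda+\lambda/s_*^{p-1}$ (using \eqref{Semipositivita}), while on $[s_*,+\infty)$, choosing $s_*\ge u_0$ and using \eqref{ConsequenceH2}, we have $g_\lambda(s)\ge s_*^\tau/C_1$. Balancing $s_*$ as a function of $\lambda$ (taking $s_*\sim \lambda^{1/(p-1+\tau)}$) yields $\inf_{s>0}g_\lambda(s)\ge c\,\lambda^{\tau/(p-1+\tau)}\to+\infty$. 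Since $\int_\Omega \phi_1^p\,dx=1$, the Picone inequality above forces
\begin{equation}
\inf_{s>0} g_\lambda(s) \;\le\; \lambda_1,
\end{equation}
which is impossible once $\lambda$ exceeds some $\lambda_0>0$. Hence no solution of \eqref{eqlemma} can exist for $\lambda\ge\lambda_0$.

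The only delicate technical point is the admissibility of $\phi_1^p/u_\lambda^{p-1}$ as a test function, since this quotient a priori could blow up near $\partial\Omega$. This is handled either by truncation (replacing $u_\lambda$ by $u_\lambda+\varepsilon$ and passing to the limit, which is consistent with Picone since both sides of \eqref{PiconeInequality} converge) or by Hopf's Lemma applied to both $\phi_1$ and $u_\lambda$, which guarantees that $\phi_1/u_\lambda$ extends continuously and boundedly to $\overline\Omega$. The remaining steps are purely algebraic manipulations of $g_\lambda$ using the structural hypotheses $H_1)$ and $H_2)$, already encoded in \eqref{ConsequenceH2}--\eqref{Semipositivita}.
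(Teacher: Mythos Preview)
Your argument is correct, and it takes a genuinely different route from the paper's own proof. Both proofs start from the Picone inequality
\[
\int_\Omega \frac{f(u_\lambda)+\lambda}{u_\lambda^{\,p-1}}\,\phi_1^p\,dx \le \lambda_1,
\]
but they diverge after that. The paper splits off the $\lambda$-term, uses \eqref{Semipositivita} to bound $\int_\Omega \lambda\,u_\lambda^{1-p}\phi_1^p \le \Lambda+\lambda_1$, and then invokes the uniform-in-$\lambda$ boundary estimate \eqref{costanteuniversale} (coming from the moving-plane machinery on the strictly convex domain) to get $u_\lambda \le C$ on $\Omega\setminus\Omega_\epsilon$, which forces $\lambda \le C^{p-1}(\Lambda+\lambda_1)\big/\int_{\Omega\setminus\Omega_\epsilon}\phi_1^p$. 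Your argument instead stays entirely at the level of the scalar function $g_\lambda(s)=(f(s)+\lambda)/s^{p-1}$ and shows, using only \eqref{ConsequenceH2} and \eqref{Semipositivita}, that $\inf_{s>0}g_\lambda(s)\to+\infty$ as $\lambda\to+\infty$; since $\int_\Omega\phi_1^p=1$, this immediately contradicts the Picone bound. Your approach is more elementary and self-contained: it needs neither the convexity of $\Omega$ nor the moving-plane estimate \eqref{costanteuniversale}, only the growth hypothesis $H_2)$. The paper's approach, on the other hand, fits naturally into its overall scheme, since the boundary estimate is already available and will be used again in the existence proof.
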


\begin{proof}
As in Step 1 of the proof in Section 3, we have by Picone's identity that \par 
$\int _{\Omega  } \frac {f(u)+ \gl }{u^{p-1}} \phi _1 ^p \leq \lambda _1 \int _{\Omega } \phi ^p= \lambda _1$, and
by \eqref{Semipositivita} we have that 
$\frac {f(u)}{u^{p-1}}$ is bounded from below by a constant $-\gL$. So we get that \par
$\int _{\Omega  } \frac {  \gl }{u^{p-1}} \phi _1 ^p \leq (\Lambda + \lambda _1) \int _{\Omega } \phi ^p=
(\Lambda + \lambda _1)$.
\par
As a consequence, using \eqref{costanteuniversale} we have that \par
$\frac {\gl} {C^{p-1}} \int _{\Omega \setminus \Omega  _{\epsilon}}  \phi _1 ^p \leq 
\int _{\Omega \setminus \Omega  _{\epsilon}  } \frac { \gl }{u^{p-1}} \phi _1 ^p \leq
\int _{\Omega   } \frac { \gl }{u^{p-1}} \phi _1 ^p \leq
 (\Lambda + \lambda _1) $, \par
where $C$ is the constant in \eqref{costanteuniversale}, so that we get the bound $\lambda _0 $ for the existence of a solution.  \par
 \end{proof}

\begin{proof}[Proof of Theorem \ref{Existence}] \ \par
To prove Theorem \ref{Existence} we will apply  Theorem \ref{Kras}.\par
\smallskip
We consider the Banach space  $X= C^0(\overline {\Omega})$, and the cone 
$C$ of nonnegative functions.\par	
\smallskip
Let us observe that in previous papers the Banach space used is the space $C^1 (\overline {\Omega})$, but it is the same (and simpler) dealing with the space of continuous functions.\par
\smallskip
If $u \in X= C^0(\overline {\Omega}) $, $\gl _0 >0 $ is the number provided in Lemma \ref{costantelambda0}, and $\Lambda $ is the number in \eqref{Semipositivita},
let  us define $v:=H(t,u)$ as the solution in $ W_0^{1,p}(\Omega)
$  of the problem 
\be
\begin{cases}
-\Delta_p (v) + \gL v^{p-1} = f(u) + t \gl _0 +  \gL u^{p-1}    &\qquad \text{in }\, \Omega  \\
\qquad\qquad\qquad\ \ v = \, 0 &\qquad \text{on }\, \partial\Omega, 
\end{cases}
\ee
and $K(u)= H(0,u)$, i.e. $v= K(u) $ solves the problem
\be
\begin{cases}
-\Delta_p (v) + \gL v^{p-1} = f(u) +   \gL u^{p-1}    &\qquad \text{in }\, \Omega  \\
\qquad\qquad\qquad\ \ v = \, 0 &\qquad \text{on }\, \partial\Omega, 
\end{cases}
\ee

\par
By the estimates in \cite{DiB, To, Lieb}, $K: C^0(\overline {\Omega }) \to C^{1, \beta} (\overline {\Omega })$ is a continuous operator, and therefore as an operator $K: X \to X $, $K$ is compact. Likewise,  $H: [0,1] \times C \to C $ is a compact homotopy.\par
By \eqref{Semipositivita} we have that \par
$f(s) + \Lambda s^{p-1} \geq 0 $ for every $s \in [0, + \infty )$, \par
 and by   the weak and strong  maximum principles we  deduce that if $u$ is nonnegative  in $\Omega $, then 
$K(u)$ is nonnegative as well, and in fact positive if it does not vanish in $\Omega$.
Likewise   $H(t,u) $  is positive if $u$  is positive.\par
This implies that $K:C \to C $ and $H: [0,1] \times C \to C $, where $C$ is the cone of nonnegative functions in $X$. \par
\medskip
We have to verify that, for a suitable choice of $\gl _0 $, hypotheses a), b), and c) in Theorem  \ref{Kras} are satisfied.\par
Let us observe that if $0 \neq u= H(t,u)$, then $u$ solves the problem
\be \label{casi b e c} 
\begin{cases}
-\Delta_p (u)   = f(u) + t \gl _0     &\qquad \text{in }\, \Omega  \\
\qquad\ \ \, u >\,0 &\qquad  \text{in }\, \Omega  \\
\qquad\ \ \, u = \, 0 &\qquad \text{on }\, \partial\Omega . 
\end{cases}
\ee
Likewise if $0 \neq u= sK (u) $, $0 \leq s \leq 1$,  then $u$ solves the problem
\be \label{caso a}
\begin{cases}
-\Delta_p (u)   = s^{p-1}f(u)- \gL (1-s^{p-1}) u^{p-1}   &\qquad \text{in }\, \Omega  \\
\qquad\ \ \, u >\,0 &\qquad  \text{in }\, \Omega  \\
\qquad\ \ \, u = \, 0 &\qquad \text{on }\, \partial\Omega .
\end{cases}
\ee

The property c) (for any choice of $R>0 $) follows from Lemma \ref{costantelambda0}, since $u= H(1,u) $ is the  equation \eqref{eqlemma} for $\gl = \gl _0$. \par
\medskip
Property b) follows by observing that the a priori estimates that we proved  are satisfied if we take 
$f(u) + t \lambda _0 $ instead of $f(u) $, and uniformly with respect to $t \in [0,1]$. \par
So, if $ \Vert u \Vert _{\infty } \leq C $ and we take $R = C+1$, there are not solutions of the equation
$u= H(t,u) $, i.e. \eqref{casi b e c}, with $\Vert u \Vert =R $. \par
\medskip
Finally property a) follows from Poincar\' e's inequality or equivalently from  definition of the first eigenvalue for the $p$-Laplacian. \par
In fact, by hypothesis $H_0)$, there exists $0 < \gl < \gl _1 $ and 
$ r_0  >0 $  such that $ \frac {f(s)}{s^{p-1 }}  \leq  \gl \ $ for any  $ s \in (0,r_0] $.\par
Let $0 < r \leq  r_0 $ and 
suppose that  $u \neq 0 $ solves $u= s K(u)$, i.e. \eqref{caso a} with $0 \leq s \leq 1$ and $\Vert u \Vert _{\infty } = r $.
Then, taking $u$ as a test function we obtain 
\begin{eqnarray*}
\int _{\Omega } |Du|^p \, dx \kern-9pt &=&\kern-9pt s^{p-1}\kern-2pt  \int _{\Omega } uf(u)   \, dx \kern-2pt-\kern-2pt \gL (1-s^{p-1})\kern-2pt  \int _{\Omega } u^{p} \, dx \kern-1pt\leq\kern-1pt  s^{p-1}\kern-2pt  \int _{\Omega } uf(u)   \, dx\\
&=&\kern-9pt s^{p-1} \int _{\Omega } \frac {f(u)} {u^{p-1}}\, u^p \, dx    \leq  s^{p-1} \gl   \int _{\Omega }  u^p \, dx 
\leq  \gl   \int _{\Omega }  u^p \, dx 
\end{eqnarray*}
which is a contradiction since $\gl < \gl _1 $.  \par
This implies that there are not nontrivial solutions of \eqref{caso a} with $0 \leq s \leq 1$ and $\Vert u \Vert _{\infty } = r \leq r_0 $.
\end{proof}
\par
\bigskip

\section*{Acknowledgments.}
We would like to thank Professor Paolo Baroni for many discussion and references about regularity problems.

\end{document}